\setlist[itemize]{leftmargin=48pt}
\numberwithin{equation}{section}
\theoremstyle{plain}
\newtheorem{theorem}[equation]{Theorem}
\newtheorem{lemma}[equation]{Lemma}
\newtheorem{proposition}[equation]{Proposition}
\theoremstyle{remark}
\newtheorem{remark}[equation]{Remark}
\theoremstyle{definition}
\newtheorem{definition}[equation]{Definition}
\newcommand{\bP}{\mathbb{P}}
\newcommand{\bQ}{\mathbb{Q}}
\newcommand{\bZ}{\mathbb{Z}}
\newcommand{\bC}{\mathbb{C}}
\newcommand{\bL}{\mathbb{L}}
\newcommand{\calC}{\mathcal{C}}
\newcommand{\calM}{\mathcal{M}}
\newcommand{\calO}{\mathcal{O}}
\newcommand{\calP}{\mathcal{P}}
\newcommand{\calD}{\mathcal{D}}
\newcommand{\Mov}{\mathrm{Mov}}
\newcommand{\Sat}{\mathrm{Sat}}
\newcommand{\SL}{\mathrm{SL}}
\newcommand{\Gr}{\mathrm{Gr}}
\newcommand{\rank}{\mathrm{rank}}
\newcommand{\NS}{\mathrm{NS}}
\newcommand{\Br}{\mathrm{Br}}
\newcommand{\Orth}{\mathrm{O}}
\newcommand{\sslash}{\mathbin{/\mkern-6mu/}}
\newcommand{\git}{/\kern-0.2em/}
\renewcommand{\div}{\mathrm{div}}
\newcommand{\pesk}{X_1^\sigma}
\newcommand{\DV}{X_6^\sigma}
\title[Peskine sixfolds \& Debarre--Voisin fourfolds with associated cubic fourfolds]{Peskine sixfolds and Debarre--Voisin fourfolds with associated cubic fourfolds}
\author{Corey Brooke}
\address{Department of Mathematics and Statistics, Carleton College, 1 North College St, Northfield, MN 55057, USA}
\email{cbrooke@carleton.edu}
\author{Laure Flapan}
\address{Department of Mathematics\\
Michigan State University\\
619 Red Cedar Road, East Lansing, MI 48824}
\email{flapanla@msu.edu}
\author{Sarah Frei}
\address{Department of Mathematics MS 136, Rice University, 6100 S.~Main St, Houston, TX 77005, USA}
\email{sarah.frei@rice.edu}
\author{Lisa Marquand}
\address{Courant Institute,
  251 Mercer Street,
  New York, NY 10012, USA}
\email{lisa.marquand@nyu.edu}
\begin{document}

\begin{abstract}
    We develop the notion of Peskine sixfolds with associated K3 surfaces and cubic fourfolds and work out numerical conditions for when these associations occur. In discriminant $24$, the first family for which there is an associated cubic fourfold, we identify the cubic explicitly. Moreover, we prove that in this case the Fano variety of lines of the cubic fourfold is isomorphic to the associated Debarre--Voisin hyperk\"ahler fourfold.
\end{abstract}

\maketitle

\section{Introduction}

There is a close connection between Fano varieties of K3 type and hyperk\"ahler manifolds of K3$^{[n]}$-type, most famously exhibited by a cubic fourfold and its Fano variety of lines \cite{BeauvilleDonagi}. 
More recently, there have been numerous similar constructions \cites{DV, DIM15, IlievManivelFanos, LLSvS, LSV, FatighentiMongardi, BernardaraFatighentiManivel}. 
Among these, the Debarre--Voisin hyperk\"ahler fourfold introduced in \cite{DV} remains somewhat mysterious. 
We recall the definition: given a $10$-dimensional complex vector space $V_{10}$ and a trivector $\sigma\in\bigwedge^3V_{10}^\vee$, the Debarre--Voisin manifold is defined by the degeneracy condition
\[
\DV=\{[V_6]\in\Gr(6,V_{10})\;|\;\sigma|_{V_6}=0\}.
\]
When $\sigma$ is general, this $\DV$ is a hyperk\"ahler fourfold. 
Initially, Debarre--Voisin observed that $\DV$ was  (rationally) Hodge-theoretically associated to a 20-dimensional hypersurface in $\Gr(3,V_{10})$. 
Later, an integral Hodge-theoretic association between $\DV$ and the so-called Peskine sixfold
\[
\pesk=\{[V_1]\in\bP(V_{10})\;|\;\rank(\sigma(V_1,-,-))\le6\},
\]
a Fano variety of K3 type, was described \cites{Han, BenedettiSong}. 
Here, we investigate divisors in the moduli space of trivectors $\sigma$ for which $\DV$ and $\pesk$ acquire Hodge-theoretic associations to K3 surfaces or cubic fourfolds; when such associations appear, it is natural to expect the birational geometry of $\DV$ and of $\pesk$ to be richer.

In the case of cubic fourfolds for instance, Hassett divisors $\mathcal{C}_d$ in the moduli space of cubic fourfolds parametrize those cubic fourfolds containing a surface not homologous to a complete intersection, where $d$ is the discriminant of the saturated lattice spanned by the class of the surface and the square of the hyperplane class.  
Hassett described the values of $d$ for which the cubic fourfolds $Y\in \mathcal{C}_d$ have associated K3 surfaces, meaning that the transcendental lattice $T_Y\subset H^4(Y,\mathbb{Z})$ is Hodge isometric to $T_S(-1)$ for some K3 surface $S$ \cite{Hassettcubicslong}.  
For such a  $Y\in \mathcal{C}_d$, the hyperk\"ahler fourfold given by the Fano variety of lines $F(Y)$ on $Y$ is always birational to a moduli space of stable sheaves on $S$ \cite{Add16}. 
Further, in this case, the cubic fourfold $Y$ is expected to be rational \cites{Hassettcubicslong, Kuznet, Add16, MR4292740}.

Here we study Peskine sixfolds $\pesk$ and their corresponding Debarre--Voisin hyperk\"ahler fourfolds $\DV$ in analogy with the case of cubic fourfolds $Y$ and their corresponding hyperk\"ahler fourfolds $F(Y)$. 
Towards this end, we study divisors $\mathcal{P}_d$ in the moduli space of trivectors $\sigma \in \bigwedge^3 V_{10}^\vee$ yielding marked Peskine sixfolds $\pesk$ or marked Debarre--Voisin fourfolds $\DV$ of discriminant $d$. 
We formalize the notion of such $\sigma\in \mathcal{P}_d$ having an associated K3 surface or an associated cubic fourfold and establish the following. 

\begin{theorem}\label{main thm 1}
    Let $\sigma\in \bigwedge^3 V_{10}^\vee$ be a very general trivector of discriminant d. Then:
    \begin{enumerate}
        \item $\sigma$ has an associated K3 surface if and only if $d$ satisfies the numerical conditions of Proposition \ref{prop:associated k3}, and
        \item $\sigma$ has an associated cubic fourfold $Y\in \calC_d$ if and only if $d$ satisfies the numerical conditions of Proposition \ref{prop:associated cubic}.
    \end{enumerate}
\end{theorem}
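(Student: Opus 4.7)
The plan is to establish both parts as consequences of lattice-theoretic criteria to be packaged into Propositions~\ref{prop:associated k3} and~\ref{prop:associated cubic}. The approach splits naturally into two steps: (i) identifying the transcendental lattice $T_\sigma$ of the relevant Hodge structure for a very general $\sigma\in\mathcal{P}_d$, and (ii) determining when $T_\sigma$ is Hodge isometric to the (appropriately twisted) transcendental lattice of a K3 surface or of a cubic fourfold in $\mathcal{C}_d$.

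First I would work out the algebraic lattice inside the relevant primitive Hodge structure of $\pesk$ (equivalently, of $\DV$, by the integral Hodge-theoretic association of \cites{Han,BenedettiSong}) for a very general $\sigma\in\mathcal{P}_d$. Such a $\sigma$ has Picard rank jumping by one, with the extra algebraic class having intersection numbers determined by $d$. From the Gram matrix of the resulting rank-two saturated algebraic sublattice, and Nikulin's discriminant-form exact sequence, one reads off the signature and discriminant form of its orthogonal complement $T_\sigma$.

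Next, having an associated K3 surface amounts to the existence of a K3 surface $S$ and a Hodge isometry $T_\sigma\cong T_S(-1)$; having an associated cubic fourfold amounts to the existence of $Y\in\mathcal{C}_d$ and a Hodge isometry $T_\sigma\cong T_Y$. Since $\sigma$ is very general, surjectivity of the period map (for polarized K3 surfaces, and for cubic fourfolds with a labelling by $\mathcal{C}_d$) reduces the question to the existence of a primitive embedding of the abstract lattice $T_\sigma$, possibly with a sign twist, into the K3 lattice $U^{\oplus 3}\oplus E_8(-1)^{\oplus 2}$ or into the primitive middle cohomology lattice of a cubic fourfold, with the expected orthogonal complement. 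I would then apply Nikulin's existence theorem for primitive embeddings, which converts these requirements into constraints on the signature and the discriminant form of $T_\sigma$, and hence into explicit numerical conditions on $d$.

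The main obstacle will be the discriminant-form bookkeeping. The $2$- and $3$-torsion parts of the discriminant group control the arithmetic conditions, exactly as in Hassett's analysis of $\mathcal{C}_d$; but because the ambient cohomology lattice of $\pesk$ is different from the cubic fourfold lattice, the resulting conditions on $d$ will not match Hassett's list. The crux is verifying that the discriminant form of $T_\sigma$ agrees, up to the appropriate sign twist, with that of the transcendental lattice of a K3 surface (for part~(1)) or with that of a cubic fourfold parametrized by $\mathcal{C}_d$ (for part~(2)); once this is established, Theorem~\ref{main thm 1} follows by reading off the arithmetic conditions assembled in Propositions~\ref{prop:associated k3} and~\ref{prop:associated cubic}.
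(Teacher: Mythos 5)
Your proposal takes essentially the same route as the paper: the paper likewise pins down the marking lattice for each $d$ (Proposition~\ref{prop:intersection forms}), computes its discriminant form (Lemma~\ref{lemma:disc form}), and then, via the Torelli theorem and surjectivity of the relevant period maps, reduces the existence of an associated K3 surface or cubic fourfold to an isomorphism of discriminant forms between the orthogonal complements, which it unwinds by quadratic reciprocity into the arithmetic conditions of Propositions~\ref{prop:associated k3} and~\ref{prop:associated cubic}. The one point to watch is that for $d=22,24$ the Peskine sixfold is singular, so the transfer between $\pesk$ and $\DV$ via the integral Hodge association is not available there; the paper handles this by phrasing the association on the $\DV$ side and citing \cite{BenedettiSong} for those two discriminants.
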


Along the way to proving Theorem~\ref{main thm 1}, we determine orbit representatives for the possible markings for smooth special Peskine varieties, see Proposition~\ref{prop:intersection forms}.

The first divisor $\calP_d$ parametrizing trivectors with an associated cubic fourfold occurs when $d=24$. 
When the trivector $\sigma $ is general in this divisor, the Peskine $\pesk$ has exactly one isolated singular point. More specifically, there is a unique flag $W_1\subset W_6\subset V_{10}$ with $\sigma(W_1,W_6,V_{10})=0$, and $\pesk$ is singular at $\bP(W_1)$ \cite{BenedettiSong}.
Further, Benedetti--Song show that such a Peskine variety $\pesk$ has an associated twisted K3 surface $(S,\beta)$ with $\beta \in \Br (S)$. 
A main result of \cite{BenedettiFaenzi} is a rationality construction when $\pesk$ satisfies a cohomological condition, and Benedetti--Faenzi conjecture that this condition holds when $\beta$ vanishes. 
More recently, \cite{hassett24} studied the case of special cubic fourfolds of discriminant $d=24$, and showed that the obstruction to rationality is again governed by a twisted K3 surface.
Thus, it is natural to ask whether there is a geometric relationship between a Peskine $\pesk$ with $\sigma\in \calP_{24}$ and its associated cubic fourfold $Y\in \calC_{24}.$

\begin{theorem}\label{main thm 2}
Let $\sigma\in \bigwedge^3 V_{10}^\vee$ be a very general trivector of discriminant 24. Then:
\begin{enumerate}
    \item $Y:=\bP(W_6)\cap \pesk$ is a smooth cubic fourfold of discriminant $24$,
    \item the Fano variety of lines $F(Y)$ is isomorphic to the Debarre--Voisin variety $\DV$, and
    \item $Y$ is Hodge-theoretically associated to $\sigma$.
\end{enumerate}
\end{theorem}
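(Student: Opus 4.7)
The plan is to prove $(1)$ by a direct Pfaffian analysis of the Peskine restricted to $\bP(W_6)$, then to establish $(3)$ by a Hodge-theoretic comparison through $\pesk$, and finally to derive $(2)$ from $(3)$ via the global Torelli theorem for hyperk\"ahler manifolds of $K3^{[2]}$-type. For $(1)$, I fix a basis $e_1,\ldots,e_{10}$ of $V_{10}$ adapted to the flag $W_1=\langle e_1\rangle\subset W_6=\langle e_1,\ldots,e_6\rangle$. The flag condition $\sigma(W_1,W_6,V_{10})=0$ forces the $e_1$-row and $e_1$-column of $M(v)=\sigma(v,\cdot,\cdot)$ to vanish for all $v\in W_6$, so quotienting by $W_1$ produces a $9\times 9$ skew matrix $M'(v)$, linear in $v=(v_1,\ldots,v_6)$, with $\bar v=(v_2,\ldots,v_6,0,0,0,0)$ in its kernel (by antisymmetry of $\sigma$). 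The Peskine condition $[v]\in\pesk$ is $\mathrm{rank}\,M'(v)\le 6$, equivalently the vanishing of all nine $8\times 8$ principal Pfaffians of $M'(v)$. Applying the classical identity that the signed vector of $8\times 8$ principal Pfaffians of a $9\times 9$ skew matrix lies in the kernel, and using that $\ker M'(v)$ is $1$-dimensional for generic $v$, I obtain $\mathrm{Pf}_j(M'(v))=\pm\lambda(v)\,\bar v_j$ for a homogeneous polynomial $\lambda(v)$ of degree $3$ (the Pfaffians are degree $4$ in $v$ while $\bar v_j$ is linear). Hence $Y=\bP(W_6)\cap\pesk$ is the cubic hypersurface $\{\lambda=0\}\subset\bP(W_6)\cong\bP^5$. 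Smoothness of $Y$ for very general $\sigma$, including at the point $[W_1]$, I would verify by specializing $\sigma$ to an explicit trivector of discriminant $24$ where the Jacobian of $\lambda$ has no zero on $\{\lambda=0\}$, then invoking irreducibility of $\calP_{24}$; the discriminant $24$ assertion follows from the Hodge-theoretic argument in $(3)$.

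For $(3)$, the starting point is the integral Hodge isometry between the transcendental lattices $T_{\pesk}$ and $T_{\DV}$ established in \cite{BenedettiSong, Han}, valid for every $\sigma$, combined with the numerical classification in Proposition~\ref{prop:associated cubic}, which predicts that for very general $\sigma\in\calP_{24}$ the transcendental Hodge structure of $\sigma$ agrees, up to the appropriate Tate twist, with that of a cubic fourfold in $\calC_{24}$. To realize this association geometrically, I would use the inclusion $Y\hookrightarrow\pesk$ to construct a morphism between the transcendental Hodge structures of $Y$ and $\pesk$ via a Gysin/Lefschetz argument, then compare ranks and lattice discriminants to conclude it is a Hodge isometry---simultaneously fixing the discriminant of $Y$ at $24$ and proving $(3)$. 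Combining Beauville--Donagi's $T_{F(Y)}\cong T_Y(-1)$ with $(3)$ then yields a Hodge isometry $T_{F(Y)}\cong T_{\DV}$. For $(2)$, I extend this to a marking-compatible Hodge isometry of the full $H^2$ lattices and verify the K\"ahler cone condition (for instance, by matching the Pl\"ucker polarization of $\DV$ with the expected ample class on $F(Y)$); the global Torelli theorem for hyperk\"ahler manifolds of $K3^{[2]}$-type then delivers the isomorphism $F(Y)\cong\DV$.

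The main obstacle is the Hodge-theoretic comparison in $(3)$: producing a well-behaved morphism between transcendental lattices across the dimension gap between $Y$ and $\pesk$, and handling the singularity of $\pesk$ at $[W_1]\in\bP(W_6)$. Possible routes include passing through a resolution of singularities of $\pesk$ or exploiting the special position of $\bP(W_6)$ relative to the tangent cone of $\pesk$ at $[W_1]$. Once the Hodge isometry in $(3)$ is secured, the Torelli argument for $(2)$ and the specialization-Jacobian check for $(1)$ should be comparatively routine.
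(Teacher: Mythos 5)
Your part (1) is essentially the paper's argument: the identification of $Y=\bP(W_6)\cap\pesk$ as a cubic hypersurface is the content of \cite[Lemma 3.4]{BenedettiFaenzi} (which the paper simply cites), and smoothness for very general $\sigma$ is indeed established by exhibiting one explicit example and using irreducibility of $\calP_{24}$ (Proposition~\ref{prop:smoothcubic} and Appendix~\ref{appendix:examples}). The problem is with your order of deduction for (2) and (3), and the gap you yourself flag as ``the main obstacle'' is genuine and is not resolved by the routes you suggest. The inclusion $Y\hookrightarrow\pesk$ is a highly non-transverse linear section (expected dimension $2$, actual dimension $4$, passing through the singular point $\bP(W_1)$ of $\pesk$), and there is no degree-preserving restriction or Gysin map relating $H^6(\pesk,\bZ)_{van}$ to $H^4(Y,\bZ)_{prim}$; one needs an algebraic correspondence that shifts cohomological degree by $2$, and neither a resolution of $\pesk$ nor the tangent-cone geometry at $\bP(W_1)$ produces one. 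Moreover, even granting (3), your Torelli argument for (2) only yields, after nontrivial work extending the transcendental isometry over the rank-$2$ N\'eron--Severi lattices, a \emph{birational} equivalence $F(Y)\dashrightarrow\DV$; upgrading this to an isomorphism requires locating the two K\"ahler cones inside the common movable cone and showing no flop separates them, which your sketch does not address.

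The paper's key idea, absent from your proposal, is to construct the map $F(Y)\to\DV$ directly and geometrically, making (2) the primary statement and (3) its corollary (via Remark~\ref{remark:birfano} and Beauville--Donagi). Concretely: lines on $Y$ are lines on $\pesk$; Lemma~\ref{lemma:little phi} identifies $F(\pesk)$ with $X_7^\sigma$ (on closed points) by $[V_7]\mapsto\bP(\ker(\sigma|_{V_7}))$, and sending $V_2\subset V_7$ to the unique $[V_6]\in\DV$ with $V_2\subset V_6\subset V_7$ gives a morphism $\Phi:F(\pesk)\to\DV$ whose fibers are the cubic surfaces underlying the Palatini threefolds $\pesk\cap\bP(V_6)$. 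Restricting to $F(Y)$ yields $\psi:F(Y)\to\DV$; the Stein factorization of $\psi$ cannot be a Lagrangian fibration (its fibers embed in cubic surfaces, hence cannot be abelian surfaces) nor constant, so $\psi$ is finite, hence unramified by triviality of the canonical bundles, hence an isomorphism since $\DV$ is simply connected. This is the correspondence your Hodge-theoretic approach is missing, and it is what actually ``crosses the dimension gap.'' If you want to salvage your outline, you should replace the Gysin step by this family-of-lines correspondence; as written, the proposal does not contain a proof of (2) or (3).
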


\subsection*{Outline}
We recall the definitions of both the Peskine varieties and Debarre--Voisin varieties in Section \ref{sec:prelims}, along with relevant results on the period maps, moduli spaces and cohomology. 
In Section \ref{sec:special pesk}, we develop the notion of special trivectors and special Peskine varieties. 
In Section \ref{sec:assoc K3/cubic}, we introduce the notion of associated K3 surfaces and cubic fourfolds for special trivectors of discriminant $d$. 
We prove that when both are smooth, our definitions coincide with the Hodge-theoretic association analogous to the case of special cubic fourfolds \cite{Hassettcubicslong}. 
We prove Theorem \ref{main thm 1} in Propositions \ref{prop:associated k3} and \ref{prop:associated cubic}. 
In Section \ref{sec:discr 24}, we study special trivectors of discriminant 24 more closely, proving Theorem \ref{main thm 2}. 
Finally, in Appendix~\ref{appendix:examples} we provide an explicit example of a trivector with its associated Peskine variety to support claims made in Section~\ref{sec:discr 24}.

\subsection*{Acknowledgments}
We greatly thank Adam Logan, who provided code enabling us to construct Peskine varieties of discriminant 24 and verify the distinguished cubic fourfold is smooth (see Appendix~\ref{appendix:examples}).
We thank Vladimiro Benedetti for helpful discussions regarding the proof of \cite[Theorem~2.20]{BenedettiSong} and Alexander Kuznetsov for comments that helped us improve the clarity of Section~\ref{sec:discr 24}.
Finally, we appreciate productive conversations around the subject with Asher Auel and Yoon-Joo Kim.

L.~Flapan was supported in part by NSF grant DMS-2200800. S.~Frei was supported in part by NSF grant DMS-2401601. L.~Marquand was supported by NSF grant DMS-2503390.

\section{Preliminaries}\label{sec:prelims}

Fixing a ten dimensional complex vector space $V_{10}$, a trivector $\sigma\in \bigwedge^3V_{10}^\vee$ defines the following varieties, given by degeneracy conditions:
\begin{itemize}
    \item \textbf{Peskine variety:} $\pesk:=\{[V_1]\in \bP(V_{10})\mid \rank(\sigma(V_1,-, -))\leq 6\}.$ The variety $\pesk$ has expected dimension 6 and is in general smooth.
    \item \textbf{Debarre--Voisin variety:} $\DV:=\{[V_6]\in \Gr(6, V_{10})\mid \sigma|_{V_6}=0\}.$ This has expected dimension 4, and for general $\sigma$ is a hyperk\"ahler manifold of K3$^{[2]}$-type \cite{DV}. 
\end{itemize}
The GIT quotient 
\[
\calM_\sigma:=\bP(\bigwedge^3V_{10}^\vee)\sslash\SL(V_{10})
\]
serves as a moduli space parametrizing both Peskine varieties and Debarre--Voisin varieties.

\subsection{The period map}

Letting $\sigma$ be general so that the variety $\DV$ is indeed a hyperk\"ahler fourfold of K3$^{[2]}$-type, the lattice $(H^2(\DV,\bZ), q)$ is isomorphic to $\bL$, where
\[
\bL:=U^3\oplus E_8^2\oplus \langle -2\rangle.
\]
There is a polarization $H\in \NS(\DV)$ satisfying $q(H)=22$ and $q(H, x)\in 2\bZ$ for all $x\in H^2(\DV,\bZ)$ \cite[Theorem~1.2]{DV}. 
We denote by $\calM_{22}^{(2)}$ the coarse moduli space of K3$^{[2]}$-type manifolds with a polarization of square 22 and divisibility 2. 
Recall that we define the divisibility of $v\in L$ for a lattice $L$ to be the minimal generator of the ideal $v\cdot L\subset \bZ$; we denote this by $\div_L(v).$

There is a unique orbit of primitive elements $h\in \bL$ with $q_\bL(h)=22$ and $\div_\bL(h)=2$; we fix such an $h$. 
For any $\DV$, we fix a labeling $\phi:H^2(\DV,\bZ)\overset{\sim}\longrightarrow  \bL$ such that $\phi(H)=h$, yielding a period map
\[
\mathscr{P}: \calM_\sigma\dashrightarrow \calM_{22}^{(2)}\rightarrow \mathbb{D}/\Gamma, \hspace{1cm} \mathscr{P}([\sigma])= [\phi(H^{2,0}(\DV))],
\]
where 
\[
\mathbb{D}:=\{x\in \bP(\bL\otimes \bC)\mid x^2=x\cdot h=0, x\cdot \bar{x}>0\}
\]
and $\Gamma=\Orth(\bL, h)$.

Note that the map $\calM_\sigma\dashrightarrow \calM_{22}^{(2)}$ is birational \cite[Theorem 1.8]{OG22}. 
Further, the period map $\mathscr{P}$ is resolved by considering the Baily-Borel projective compactification $\overline{\mathbb{D}/\Gamma}$ of $\mathbb{D}/\Gamma$ to obtain a diagram:
\[
\xymatrix{ \widetilde{\calM}_\sigma \ar[r]^-{\widetilde{\mathscr{P}}} \ar[d]^{\varepsilon} & \overline{\mathbb{D}/\Gamma} \\
\calM_\sigma \ar@{-->}[r]_-{\mathscr{P}}  & \mathbb{D}/\Gamma, \ar@{^{(}->}[u] }
\]
where $\varepsilon$ is a birational morphism.
Let $v\in \langle h\rangle^\perp$ be a class of negative square. 
The Heegner divisor associated to $v$ is the image in $\mathbb{D}/\Gamma$ of the hypersurface:
\[
\{x\in \bP(\bL\otimes \bC) \mid x^2=x\cdot h=x\cdot v=0, x\cdot\bar{x}>0\}.
\]
Letting $N:=\langle h, v\rangle \subset \bL$, this divisor is equal to the image of $\bP(N^\perp\otimes \bC)\subset \bP(\bL\otimes \bC)$ in $\mathbb{D}/\Gamma$. 
The discriminant of $N^\perp\subset \bL$ is a positive even integer $d$; we denote this divisor by $\calD_{d}\subset \mathbb{D}/\Gamma$ and label the lattice $N_{d}:=N$.

The inverse image via the period map of the Heegner divisor $\calD_{d}$ inside $\calM_{22}^{(2)}$ is a Noether-Lefschetz divisor $\widetilde{\calD}_{d}\subset \mathcal{M}_{22}^{(2)}$ parametrizing smooth hyperk\"ahler fourfolds $X$ with Picard rank at least 2 and a marking $N_d\subset \NS(X)$. 
Note that for a general hyperk\"ahler fourfold $X\in \widetilde{\calD}_{d}$, we have $T(X)\cong N_{d}^\perp.$

\begin{remark}\label{rem: HLS divisor}
    Some divisors $\widetilde{\calD}_{d}$ are not in the image of the map $\calM_\sigma\dashrightarrow \calM_{22}^{(2)},$ meaning that the corresponding hyperk\"ahler fourfolds are not constructed from a trivector $\sigma$. 
    These are the inverse images of the HLS divisors on $\overline{\mathbb{D}/\Gamma}$, introduced in \cite{DHOGV}. 
    More precisely, an HLS divisor  is an irreducible hypersurface in $\overline{\mathbb{D}/\Gamma}$ which is the image by $\widetilde{\mathscr{P}}$ of an exceptional divisor of $\varepsilon$, so in particular has inverse image in $\calM_\sigma$ of codimension greater than 1. 
    By \cite[Theorem~1.1]{DHOGV}, \cite[Corollary 1]{Oberdieck}, and \cite[Corollary~6.5]{SongspecialDV}, the Heegner divisor $\calD_d$ is an HLS divisor if and only if  $d\in \{2,6,8,10,18\}$. 
\end{remark}

\subsection{The cohomology of $\pesk$}\label{sec: coh of pesk}

Let $\pesk$ be a smooth Peskine sixfold. 
We recall some basic facts about the cohomology of $\pesk$ from \cite[Section 2.6]{BenedettiSong}. 
The middle cohomology $H^6(\pesk, \bZ)$ equipped with the intersection pairing is a unimodular odd lattice of signature $(22,2)$, so it is isomorphic to $I_{22,2}:= \langle 1\rangle^{22}\oplus \langle -1\rangle^2$. 
We fix an isomorphism $H^6(X_1^\sigma,\bZ)\cong I_{22,2}$ throughout.

By \cite[Theorem 1.2]{BenedettiSong}, the integral Hodge conjecture holds for $\pesk$ in all degrees.
In particular, classes $\tau\in H^{3,3}(\pesk,\bZ)$ are represented by algebraic cycles. 
There are always two distinguished classes in the middle algebraic cohomology of $\pesk$: the cube $h^3$ of the hyperplane class and the class $\pi$ of a Palatini threefold, obtained as the linear section $\pesk\cap\bP (V_6)$ for any $[V_6]\in\DV$ \cite{Han}. 
Recall that a Palatini threefold is a smooth degree 7 threefold in $\bP^5$, which is a scroll over a smooth cubic surface. 
These classes span the sublattice
\[
\Lambda_{11}:=
\begin{pmatrix}
    15 & 7 \\
    7 & 4
\end{pmatrix}\subset I_{22,2}.
\]
The vanishing cohomology of $\pesk$ is defined as
$$\Lambda_{van}:=H^6(\pesk,\bZ)_{\mathrm{van}}:=\Lambda_{11}^\perp\subset H^6(\pesk,\bZ).$$ 
It is a polarized integral Hodge structure of K3 type with discriminant 11.

An incidence correspondence referred to as the \emph{universal Palatini variety} relates the vanishing cohomology of $\pesk$ to the primitive cohomology of $\DV$:

\begin{theorem}\cite[Theorem 2.16]{BenedettiSong} \label{theo:hodge isometry}
For a trivector $\sigma\in \bigwedge^3 V_{10}^\vee$ such that both $\DV$ and $\pesk$ are smooth, there is an isomorphism of polarized integral Hodge structures
$$\phi:H^6(\pesk,\bZ)_{van}\rightarrow H^2(\DV, \bZ)_{prim}(-1).$$    
\end{theorem}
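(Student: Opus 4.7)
The plan is to construct the Hodge isometry $\phi$ via the \emph{universal Palatini correspondence}
\[
Z := \{([V_1],[V_6]) \in \pesk \times \DV : V_1 \subset V_6\},
\]
with projections $p\colon Z \to \pesk$ and $q\colon Z \to \DV$. The fibers of $q$ are precisely the Palatini threefolds $\pesk \cap \bP(V_6)$ used in the construction of the class $\pi$, and $\dim Z = 7$. From this data I would build $\phi$ as a correspondence of the form
\[
\phi(\alpha) = q_\ast\bigl(p^\ast\alpha \cup \gamma\bigr),
\]
where $\gamma$ is an auxiliary algebraic class on $Z$ chosen to produce the correct degree shift from $H^6(\pesk,\bZ)$ to $H^2(\DV,\bZ)$; natural candidates for $\gamma$ come from Chern classes of tautological bundles on $\Gr(6,V_{10})$ restricted to $Z$, or from suitable powers of $p^\ast h$. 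Since $[Z]$ and $\gamma$ are algebraic, $\phi$ is automatically a morphism of Hodge structures of the appropriate weight, and the Tate twist $(-1)$ is accounted for by the overall codimension shift.

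Next I would check that $\phi$ carries the $\Lambda_{11}$-generators $h^3, \pi \in H^6(\pesk,\bZ)$ into $\bZ\cdot H \subset H^2(\DV,\bZ)$, so that $\phi$ descends to a map $\Lambda_{van} \to H^2(\DV,\bZ)_{prim}(-1)$. The Palatini class $\pi$ is by construction the fundamental class of a fiber of $q$, so $\phi(\pi)$ reduces via the projection formula to a rational multiple of $H$. For $h^3$, pulling $h$ back along $p$ and applying a projective bundle computation on $\bP(\mathcal{U}|_{\DV}) \to \DV$ (with $\mathcal{U}$ the tautological rank-$6$ subbundle on $\Gr(6,V_{10})$) expresses $\phi(h^3)$ as a polynomial in Chern classes of $\mathcal{U}|_{\DV}$, which lies in $\NS(\DV) = \bZ\cdot H$ for very general $\sigma$.

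The main obstacle will be upgrading the resulting rational Hodge morphism to an integral Hodge \emph{isometry}. Both lattices $\Lambda_{van}$ and $H^2(\DV,\bZ)_{prim}$ have rank $22$ and discriminant $11$ (the latter computed from $\mathrm{disc}(\bL) = 2$, $q(H) = 22$, and $\mathrm{div}_\bL(H) = 2$), so any injective Hodge morphism between them is automatically an isomorphism up to a rational scalar, which can be pinned down by a single intersection number. Integrality is harder, and I would attack it through the Leray spectral sequence for $q\colon Z \to \DV$, using smoothness of the Palatini fibers to compute $R^iq_\ast\bZ$, combined with a Lefschetz-type injectivity argument exploiting that Palatini threefolds cover $\pesk$ as $[V_6]$ ranges over $\DV$. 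The sign in the Tate twist then follows from a Gysin calculation comparing the self-intersection forms on $Z$ with those on $\pesk$ and $\DV$.
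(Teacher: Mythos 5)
First, note that the paper does not prove this statement at all: it is imported verbatim as \cite[Theorem 2.16]{BenedettiSong}, and the only hint given is that the isometry is induced by the ``universal Palatini variety.'' Your proposal correctly identifies that incidence correspondence $Z\subset\pesk\times\DV$ and the general shape of the argument, so at the level of strategy you are reconstructing the right proof. However, as written the proposal has genuine gaps at exactly the points where the real work lies.

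The most serious gap is the passage from ``$\phi(h^3)$ and $\phi(\pi)$ are multiples of $H$'' to ``$\phi$ restricts to a map $\Lambda_{van}\to H^2(\DV,\bZ)_{prim}$.'' Knowing where $\Lambda_{11}$ goes says nothing about where its orthogonal complement goes unless you already know $\phi$ is compatible with the intersection pairings --- which is essentially the statement being proven. The standard way out is a monodromy/irreducibility argument: for \emph{very general} $\sigma$ the Hodge structure on $H^6(\pesk,\bQ)_{van}$ is irreducible, so any nonzero morphism of Hodge structures lands in the transcendental (hence primitive) part of $H^2(\DV,\bQ)$ and is an isometry up to a scalar by Schur-type reasoning; but then the theorem, which is asserted for \emph{every} $\sigma$ with both varieties smooth, requires an additional deformation/local-constancy argument for the family of correspondences over the smooth locus. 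Your proposal invokes neither step, and your claim that an injective Hodge morphism between two rank-$22$, discriminant-$11$ lattices is ``automatically an isomorphism up to a rational scalar'' is false without irreducibility. Secondly, the integrality-and-isometry upgrade is the hardest part and your plan for it (Leray spectral sequence for $q$, whose fibers are in general \emph{singular} cubic-surface scrolls over special points of $\DV$, plus an unspecified Gysin comparison) is not a workable argument as stated; one needs either an explicit inverse correspondence or a primitivity argument combined with the fact that both lattices have discriminant $11$, so that an isometric embedding of one into the other is forced to be onto. Finally, the computation of $\phi(\pi)$ is not ``the projection formula'': $\pi=p_*[q^{-1}([V_6])]$, and $p^*p_*$ of a fiber class is not the fiber class, so even this step requires an actual excess-intersection computation. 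Filling these three holes would essentially reproduce the proof in \cite{BenedettiSong}; without them the proposal is an accurate outline but not a proof.
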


\section{Special Peskine sixfolds}\label{sec:special pesk}

We continue to fix a 10-dimensional complex vector space $V_{10}$, and let $\sigma\in \bigwedge^3V_{10}^\vee$ be a trivector for which $\pesk$ has the expected dimension. 
In this section, we define marked and special Peskine sixfolds $\pesk$ and trivectors $\sigma$. 
We then classify the possible markings for special Peskine sixfolds.

\begin{definition}
    Let $\pesk$ be a smooth Peskine sixfold.
    We say that $\pesk$ is a \textbf{marked} Peskine sixfold of \textbf{discriminant} $d$ if there exists a saturated rank-3 sublattice $M_d$ of discriminant $d$ with
    \[
    \Lambda_{11}\subset M_d\subset H^{3,3}(X_1^\sigma, \bC)\cap H^6(\pesk,\bZ),
    \]
    and we call $M_d$ a \textbf{marking} of $X_1^\sigma$.
\end{definition}

In particular, a very general marked Peskine sixfold $\pesk$ of discriminant $d$  has a unique marking, given by  $M_d= H^{3,3}(\pesk,\bC)\cap H^6(\pesk,\bZ)$. 
Due to the Hodge isometry in Theorem~\ref{theo:hodge isometry}, 
\[
H^6(\pesk,\bZ)\supset M_d^\perp\cong T(\pesk)\cong T(\DV)(-1)\cong N_{d}^\perp \subset H^2(\DV,\bZ),
\]
so the image of $\sigma$ under the period map lies in the Heegner divisor $\calD_d\subset \mathbb{D}/\Gamma$. 
It follows that the preimage of the Heegner divisor $\calD_{d}$ under the period map, which we denote by
\[
\calP_d:=\mathscr{P}^{-1}(\calD_d)\subset \calM_\sigma,
\]
has general points parametrizing marked Peskine sixfolds of discriminant $d$.

\begin{remark}
    Whenever $\calD_d$ is not a HLS-divisor, $\calP_d$ is a non-empty divisor (see Remark \ref{rem: HLS divisor}). In particular, all $\calP_d$ are divisors for $22 \leq d$. 
\end{remark}

In \cite[Proposition 2.9]{BenedettiSong}, the authors identify two divisors $D^{1,6,10}, D^{3,3,10}\subset \calM_\sigma$ that together parametrize the singular Peskine sixfolds. More precisely, these divisors have the following definition:

\begin{definition}\label{defn:flag divisor}
    A trivector $\sigma\in D^{1,6,10}$ if there exists a flag $V_1\subset V_6\subset V_{10}$ such that $\sigma(V_1,V_6,V_{10})=0$. Similarly,  $\sigma\in D^{3,3,10}$ if there exists a $V_3\subset V_{10}$ such that $\sigma(V_3, V_3,V_{10})=0.$
\end{definition}
 
The period map $\mathscr{P}$ maps $D^{1,6,10}$ birationally onto $\calD_{24}\subset \mathbb{D}/\Gamma$ and $D^{3,3,10}$ birationally onto $\calD_{22}\subset \mathbb{D}/\Gamma$. We thus set
\begin{align*}
\calP_{22}&:=\calD^{3,3,10}\simeq \mathscr{P}^{-1}(\calD_{22}), \\
\calP_{24}&:=\calD^{1,6,10}\simeq \mathscr{P}^{-1}(\calD_{24}).
\end{align*}
This allows us to consider singular Peskine sixfolds when making the following definition:
\begin{definition}\label{defn:specialtrivector}
    A \textbf{special Peskine sixfold of discriminant $d$} is a Peskine sixfold $\pesk$ such that $\sigma\in \calP_d.$
    In this case, we also say that the trivector $\sigma\in \bigwedge^3V_{10}^\vee$ is \textbf{special of discriminant }$d$.
\end{definition}

For $\sigma\in \calP_d\setminus(\calP_{22}\cup \calP_{24})$, the special Peskine sixfold $\pesk$ admits a marking of discriminant $d$. 
However, for $\sigma\in \calP_{22}\cup \calP_{24},$ the Peskine sixfold $\pesk$ is singular. 
In these two cases, it is not clear whether $\pesk$ admits a marking since $H^6(\pesk,\bZ)$ may not be a pure Hodge structure.

\begin{remark}
    Since the Debarre--Voisin variety associated to $\sigma\in \calP_{24}$ is smooth \cite[Lemma 2.1]{BenedettiSong} and $\pesk$ has only isolated singularities \cite[Proposition 2.9]{BenedettiSong}, we wonder whether the incidence correspondence of \cite[Theorem 2.16]{BenedettiSong} can be extended to prove that the Hodge structure on $H^6(\pesk,\bZ)_{van}$ remains pure. 
    Alternatively, one could hope that the singularity of $\pesk$ is suitably mild enough (as in the case of cubic fourfolds with $ADE$-singularities) to ensure the purity.
    Computer algebra systems such as Magma and Macaulay2 were unable to compute invariants of the singularity.
\end{remark}

To complete our terminology, we make the following definition:

\begin{definition}
    A \textbf{marked trivector} $\sigma$ \textbf{of discriminant} $d$ is a trivector $\sigma\in \calP_d\setminus \calP_{22}$ and an  embedding $H\in N_d\subset \NS(\DV),$ where $N_d$ is a saturated rank 2 lattice of discriminant $d$.
\end{definition}

Note that we omit $\calP_{22}$ from the definition because in that case, both $\pesk$ and $\DV$ are singular. 
When $\sigma\in \calP_d\setminus(\calP_{22}\cup \calP_{24})$, the Debarre--Voisin fourfold $\DV$ and the Peskine sixfold  $\pesk$ both have markings with the same discriminant $d$ and the same discriminant form.

We recall restrictions on the discriminants of Peskine sixfolds:

\begin{lemma}\cite[Proposition 4.1]{DM19} \label{prop:d mod 22}
    If $X_1^\sigma$ is a smooth special Peskine of discriminant $d$, then $d>0$, and $d\equiv0,2,6,8,10$ or $18\bmod22$.
\end{lemma}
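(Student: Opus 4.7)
The plan is to work on the Peskine side, parametrizing a marking $M_d \supset \Lambda_{11}$ in $H^6(\pesk,\bZ) \cong I_{22,2}$ by the choice of one extra class $\gamma \in M_d$, computing the Gram determinant explicitly, and reducing the resulting expression modulo $2$ and modulo $11$.

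\textbf{Setup and modulo $11$.} Fix the basis $\alpha,\beta$ of $\Lambda_{11}$ with Gram matrix $\bigl(\begin{smallmatrix} 15 & 7 \\ 7 & 4 \end{smallmatrix}\bigr)$. Since $\Lambda_{11}$ is saturated in $I_{22,2}$, the quotient $M_d/\Lambda_{11}$ is torsion-free of rank one, so I may choose $\gamma$ completing $\{\alpha,\beta\}$ to a $\bZ$-basis of $M_d$. Setting $a = \langle \alpha,\gamma\rangle$, $b = \langle\beta,\gamma\rangle$, $c = q(\gamma)$, a cofactor expansion yields
\[
\det(M_d) \;=\; 11c + 14ab - 4a^2 - 15b^2.
\]
Positivity $d := |\det(M_d)| > 0$ is immediate from the non-degeneracy of $\Lambda_{11}$ together with the rank-3 condition $\gamma \notin \Lambda_{11}\otimes\bQ$. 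Reducing modulo $11$ kills the $c$-term and leaves the binary form $3ab - 4a^2 - 4b^2$, whose $\bF_{11}$-discriminant $9-64$ vanishes; completing the square identifies it with $-4(a+b)^2 \equiv 7(a+b)^2$. Since $(a+b)^2$ takes only the values $\{0,1,3,4,5,9\}$ modulo $11$, this gives $d \bmod 11 \in \{0,2,6,7,8,10\}$.

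\textbf{Parity and conclusion.} To show $d$ is even, I would invoke Theorem~\ref{theo:hodge isometry} to identify $T(\pesk) = M_d^\perp$ with $N_d^\perp(-1) \subset \bL$. The latter is an even lattice of rank $21$; and any even lattice of odd rank has even discriminant, because its Gram matrix reduces modulo $2$ to a symmetric $\bF_2$-matrix with zero diagonal, i.e.\ an alternating form on an odd-dimensional space, which is automatically degenerate. Combining $d$ even with the six admissible residues modulo $11$ via the Chinese remainder theorem yields $d \equiv 0,2,6,8,10,18 \pmod{22}$.

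\textbf{Expected obstacle.} The mod-$11$ reduction is immediate once one notices the degeneracy of the binary form, which is a direct reflection of the discriminant of $\Lambda_{11}$ being $11$. The subtler point is the parity argument: it does not follow from working inside the odd unimodular lattice $I_{22,2}$ alone, and genuinely requires Theorem~\ref{theo:hodge isometry} to transfer evenness from $\bL$ to the transcendental lattice of $\pesk$.
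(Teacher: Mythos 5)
Your argument is correct, and it is worth noting that the paper gives no proof of this lemma at all: it is quoted from \cite{DM19}*{Proposition 4.1}, where the congruences are obtained by a rank-two lattice computation inside the even lattice $\bL$ (classifying $N_d\ni h$ with $q(h)=22$, $\div_\bL(h)=2$). You instead work on the Peskine side with rank-three sublattices $M_d\supset\Lambda_{11}$ of $I_{22,2}$; your determinant $11c+14ab-4a^2-15b^2$ and its degeneration to $7(a+b)^2$ modulo $11$ is, in compressed form, the same computation the paper carries out case by case in Proposition~\ref{prop:intersection forms} (your dichotomy $a+b\equiv0$ versus $a+b\not\equiv0\bmod 11$ matches the cases $\div_{\Lambda_{van}}(w)=1$ and $=11$ there, and $7\cdot\{0,1,3,4,5,9\}=\{0,7,10,6,2,8\}$ reproduces the six residues). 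The two routes are equivalent via Theorem~\ref{theo:hodge isometry}; yours makes the mod-$11$ restriction visible in one line, while the $\bL$-side argument never needs the Hodge isometry because evenness is built into $\bL$ from the start.

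Two small points to tighten. First, for $d>0$ you argue only non-degeneracy of $M_d$; the positivity of the discriminant really comes from $M_d$ being positive definite, which holds because $M_d$ lies in the algebraic part of $H^6(\pesk,\bZ)$ and the period plane of the K3-type structure on $\Lambda_{van}$ (of signature $(20,2)$) is negative definite --- again an input from Theorem~\ref{theo:hodge isometry} or from the Hodge--Riemann relations, not from the lattice alone. Second, your parity step silently uses that $|\mathrm{disc}(M_d^\perp)|=|\mathrm{disc}(M_d)|=d$, which holds because $M_d$ is primitive in the unimodular lattice $I_{22,2}$; since this identity is the hinge that transfers the evenness of the rank-$21$ even lattice $M_d^\perp\cong N_d^\perp(-1)$ back to $d$, it deserves an explicit sentence. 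With those two additions the proof is complete, and your closing remark is accurate: the evenness of $d$ is genuinely geometric input (the evenness of $\Lambda_{van}$) and cannot be extracted from the embedding $\Lambda_{11}\subset I_{22,2}$ alone.
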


To build on this result, we classify the markings of special Peskine sixfolds, similar to the analogous result for Gushel-Mukai fourfolds from \cite[Proposition 6.2]{DIM15}.  
Let 
\[
\widetilde{\Orth}(\Lambda_{van})=\{g\in\Orth(I_{22,2})\;|\;g|_{\Lambda_{11}}=\mathrm{id}\},
\]
and for each $d$, let $\calO_d$ be the set of orbits for the action of $\widetilde{\Orth}(\Lambda_{van})$ on the set of primitive, positive definite, rank 3 sublattices $M_d\subset I_{22,2}$ of discriminant $d$ containing $\Lambda_{11}$.

\begin{proposition}\label{prop:intersection forms}
Let $\pesk$ be a smooth special Peskine sixfold of discriminant $d$. For each integer $d>0$ with $d\equiv0,2,6,8,10$, or $18\bmod22$, the set of orbits $\calO_d$ consists of a single element, which is represented by
\[
M_d\simeq\begin{pmatrix}
            15 & 7 & a \\
            7 & 4 & b\\
            a & b & c
        \end{pmatrix},
\]
where
    \begin{enumerate}
        \item[(a)] if $d\equiv0\bmod22$, then $(a,b,c)=(0,0,d/11)$;
        \item[(b)] if $d\equiv2\bmod22$, then $(a,b,c)=(3,1,(d+9)/11)$;
        \item[(c)] if $d\equiv6\bmod22$, then $(a,b,c)=(1,1,(d+5)/11)$;
        \item[(d)] if $d\equiv8\bmod22$, then $(a,b,c)=(2,1,(d+3)/11)$;
        \item[(e)] if $d\equiv10\bmod22$, then $(a,b,c)=(3,2,(d+12)/11)$;
        \item[(f)] if $d\equiv18\bmod22$, then  $(a,b,c)=(1,0,(d+4)/11)$.
    \end{enumerate}
\end{proposition}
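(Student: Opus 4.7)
My plan is to determine the Gram matrix of $M_d$ in an appropriate basis and then classify embeddings via Nikulin--Eichler theory. Given a marking $M_d$, choose $\tau \in M_d$ so that $\{h^3, \pi, \tau\}$ is a $\bZ$-basis and set $a = h^3 \cdot \tau$, $b = \pi \cdot \tau$, $c = \tau \cdot \tau$. Expanding the determinant yields
\[
d = 11 c - (4 a^2 - 14 a b + 15 b^2),
\]
so reducing modulo $11$ gives $(a+b)^2 \equiv 8 d \pmod{11}$. Since the quadratic residues mod $11$ are $\{0, 1, 3, 4, 5, 9\}$, solvability forces $d$ to lie in one of the six residue classes of Lemma~\ref{prop:d mod 22}, with $(a+b) \bmod 11$ taking values $0, \pm 4, \pm 2, \pm 3, \pm 5, \pm 1$ for $d \equiv 0, 2, 6, 8, 10, 18 \pmod{22}$ respectively. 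Direct substitution confirms the displayed Gram matrices have determinant $d$.

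Next I would reduce an arbitrary marking to one of these representatives. Replacing $\tau$ by $\tau + m h^3 + n \pi$ sends $(a, b) \mapsto (a + 15 m + 7 n, b + 7 m + 4 n)$, and since $\begin{pmatrix} 15 & 7 \\ 7 & 4 \end{pmatrix}$ has Smith normal form $\mathrm{diag}(1, 11)$, the cokernel of the associated map $\bZ^2 \to \bZ^2$ is cyclic of order $11$ detected by the invariant $(a+b) \bmod 11$. The sign change $\tau \mapsto -\tau$ sends $(a+b) \mapsto -(a+b)$, so after these basis operations $(a, b)$ is equivalent to one of the six explicit pairs listed in items (a)--(f), and then $c$ is uniquely determined by $c = (d + 4 a^2 - 14 a b + 15 b^2)/11$.

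Finally, I would show that any two markings with the same reduced Gram matrix lie in a single $\widetilde{\Orth}(\Lambda_{van})$-orbit. Identifying $\widetilde{\Orth}(\Lambda_{van})$ with the stable orthogonal group $\Orth^{\#}(\Lambda_{van})$ of isometries acting trivially on $A_{\Lambda_{van}} \cong \bZ/11$ (via the theory of overlattices for the unimodular extension $\Lambda_{11} \oplus \Lambda_{van} \subset I_{22,2}$), a marking $M_d$ determines the primitive vector $\tau' \in \Lambda_{van}$ generating $M_d \cap \Lambda_{van}$. Solving for integer solutions to the two orthogonality conditions shows $\tau' = (7b - 4a) h^3 + (7a - 15b) \pi + 11 \tau$ (up to sign), and a direct calculation gives $\tau'^2 = 11 d$; the class $[\tau'] \in A_{\Lambda_{van}}$ is likewise recoverable from $(a,b,c)$. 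Since $\Lambda_{van}$ has indefinite signature $(20, 2)$ of sufficiently large rank with prime discriminant, Eichler's criterion implies that the $\Orth^{\#}$-orbit of a primitive vector is determined by these two invariants, yielding orbit uniqueness. The main obstacle I anticipate is verifying the hypotheses of Eichler's criterion in this odd lattice setting and carefully matching the invariants of $\tau'$ extracted from the Gram matrix with those governing the $\Orth^{\#}$-orbit.
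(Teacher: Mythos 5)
Your argument is correct and rests on the same key step as the paper's proof: Eichler's criterion (as in \cite[Lemma 3.5]{GHS}) applied to the primitive generator of $M_d\cap\Lambda_{van}$, whose $\widetilde{\Orth}(\Lambda_{van})$-orbit is determined by its square and its class in $D(\Lambda_{van})\cong\bZ/11\bZ$. The difference is only in how the representative Gram matrices are produced: the paper enumerates the five glue classes $w_*\mapsto[k(h^3+\pi)/11]$ and builds $M_d$ as the overlattice generated by $\tau=(k(h^3+\pi)+w)/11$, whereas you normalize $(a,b,c)$ directly via the determinant identity $d=11c-(4a^2-14ab+15b^2)$ and the observation that the translations $\tau\mapsto\tau+mh^3+n\pi$ sweep out exactly the kernel of $(a,b)\mapsto a+b\bmod 11$; this is the same computation run in the opposite direction, and your residue analysis $(a+b)^2\equiv 8d\bmod 11$ checks out. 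Three points to tidy up. First, your vector $\tau'=(7b-4a)h^3+(7a-15b)\pi+11\tau$ is $11$ times the orthogonal projection of $\tau$ onto $\Lambda_{van}\otimes\bQ$, and it is \emph{not} primitive when $a+b\equiv0\bmod11$ (for $(a,b)=(0,0)$ it equals $11\tau$); so case (a) requires the separate divisibility-one analysis, exactly as in the paper, where one also checks that $M_d=\Lambda_{11}\oplus\bZ\tau$ splits. Second, there is no ``odd lattice'' obstacle: Eichler's criterion is applied inside $\Lambda_{van}\cong H^2(\DV,\bZ)_{prim}(-1)$, which is even and contains $U^{\oplus2}\oplus E_8^{\oplus2}$, so the hypotheses hold verbatim; the oddness of $I_{22,2}$ is irrelevant. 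Third, to conclude that $\calO_d$ is a single element rather than at most one, you still need the existence statement accompanying Eichler's criterion (a primitive $w\in\Lambda_{van}$ with prescribed square and prescribed class in $D(\Lambda_{van})$, subject to the evenness constraint $\tau^2\in2\bZ$ that pins down $d\bmod 2$); the paper makes this explicit in case (a) and you should say a word about it in the remaining cases.
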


\begin{proof}
    Suppose that $\pesk$ is a smooth Peskine sixfold of discriminant $d$ with a marking $M_d$. 
    Let $w$ be a generator of the rank $1$ sublattice $M_{van}:=M_d\cap \Lambda_{van}$.
    By Eichler's criterion, recalled in \cite[Lemma 3.5]{GHS}, the orbit of a primitive vector $w\in\Lambda_{van}$ under the action of $\widetilde{\Orth}(\Lambda_{van})$ is determined by $w^2$ and the image $w_*=w/\div_{\Lambda_{van}} w$ of $w$ in the discriminant group $D(\Lambda_{van}):= \Lambda_{van}^\vee/\Lambda_{van}$.

    First, we consider the case $\div_{\Lambda_{van}}(w)=1$, so that the class $w_*\in D(\Lambda_{van})$ is trivial. 
    Let $w'\in M_d$.  
    We can choose coprime integers $a$, $b$, and $c$ so that (in the notation of Section \ref{sec: coh of pesk})
    \[
    ah^3+b\pi+cw=mw'
    \]
    for some integer $m$. 
    Since $\div_{\Lambda_{van}}(w)=1$, there is some class $w''\in \Lambda_{van}$ for which $w\cdot w''=1$, and pairing the equation above with $w''$ yields $c=m(w'\cdot w'')$. 
    So then
    \[
    ah^3+b\pi=m(w'-(w'\cdot w'')w).
    \]
    In particular, $m$ divides $a$, $b$, and $c$. 
    Since $a, b$ and $c$ are coprime, we must have $m=1$ and $w'\in \Lambda_{11}\oplus\bZ w$. 
    It follows that $M_d=\Lambda_{11}\oplus\bZ w$. 
    
    To complete the proof of (a), it remains to find a class $w\in\Lambda_{van}$ with $\div_{\Lambda_{van}}(w)=1$ and $w^2=2k$ for each integer $k$. 
    For this, choose a hyperbolic factor $U$ of $\Lambda_{van}$ with standard generators $u$ and $v$, and take $w=u+kv$. 

    Now, consider the case $\div_{\Lambda_{van}}(w)=11$. 
    Then $0\neq w_*\in D(\Lambda_{van})$, so $w_*$ generates $D(\Lambda_{van})\cong \bZ/11\bZ$, and the index of $\Lambda_{11}\oplus\bZ w$ in $M_d$ is $11$. 
    In particular, $w^2=11d$ where $d$ is the discriminant of $M_d$. 
    Since $w$ is only determined up to multiplication by $\pm1$, we consider five of the ten generators of $D(\Lambda_{van})$ separately.

    If under the gluing isomorphism $D(\Lambda_{van})\overset{\sim}\to D(\Lambda_{11})$, $w_*\mapsto[\frac{h^3+\pi}{11}]$, then $\tau=\frac{h^3+\pi+w}{11}\in I_{22,2}$. 
    From the fact that 
    \[
    \tau^2=\frac{33+w^2}{11^2}=\frac{3+d}{11}
    \]
    is an integer, we find $d\equiv8\bmod11$. 
    Moreover, $\tau\in M_d$, and $\Lambda_{11}\oplus\bZ w$ has index $11$ in $M_d$. 
    Thus $M_d$ is generated by $h^3$, $\pi$, and $\tau$, and calculating the intersection form on $M_d$ yields the representative in (d). 

    More generally, if $w_*\mapsto \left[\frac{k(h^3+\pi)}{11}\right]$ for $1\leq k \leq 5$, then $\tau=\frac{k(h^3+\pi)+w}{11}\in I_{22,2}.$ 
    Thus $\tau^2=\frac{3k^2+d}{11}$ is an integer, and $d\equiv -3k^2 \bmod 11.$ 
    In fact, $\tau^2$ should be an even integer. 
    The result then follows by varying $k$ and computing the intersection form under the basis $h^3$, $\pi$, and $\pi-\tau$ in the cases $k=2,3$ (corresponding to (e) and (c), respectively) and the basis $h^3$, $\pi$, and $\tau-\pi$ in the cases $k=4,5$ (corresponding to (f) and (b), respectively).
\end{proof}

For later use, we record the value of the quadratic form $q_{M_d}$ on $D(M_d)$:

\begin{lemma}\label{lemma:disc form}
    Let $X_1^\sigma$ be a smooth special Peskine sixfold of discriminant $d$ with a marking $M_d$. 
    Let $\{h^3,\pi,\tau\}$ be a basis for $M_d$ so that the intersection form on $M_d$ is as in Proposition~\ref{prop:intersection forms}.
    \begin{itemize}
        \item[(a)] If $d\equiv0\bmod22$, then $D(M_d)\cong\bZ/11\bZ\times \bZ/d'\bZ$ where $d'=d/11$. Moreover, if $11\nmid d'$, then $D(M_d)$ is cyclic, and the discriminant form takes value
        \[
        q_{M_d}\left(\frac{h^3+\pi}{11}+\frac{\tau}{d'}\right)=\frac3{11}+\frac1{d'}\in\bQ/2\bZ
        \]
        on a generator of $D(M_d)$.
        \item[(b)] If $d\equiv 2,6,8,10,$ or $18\bmod22$, then $D(M_d)\cong\bZ/d\bZ$, and the discriminant form takes value $\frac{11}d$ on a generator of $D(M_d)$.
    \end{itemize}
\end{lemma}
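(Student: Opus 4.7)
The strategy is to reduce both parts to direct computations with the Gram matrices from Proposition~\ref{prop:intersection forms}, handling case (b) via the standard overlattice/gluing dictionary for discriminant forms.

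For part (a), observe that the Gram matrix is block diagonal because $(a,b) = (0,0)$, so $M_d = \Lambda_{11} \oplus \bZ\tau$ with $\tau^2 = d'$. Hence $D(M_d) \cong D(\Lambda_{11}) \oplus D(\bZ\tau) \cong \bZ/11\bZ \oplus \bZ/d'\bZ$, which becomes cyclic when $\gcd(11,d') = 1$ by the Chinese remainder theorem. A direct check shows that $(h^3+\pi)/11 \in \Lambda_{11}^\vee$ has self-pairing $(15 + 14 + 4)/121 = 3/11$ and generates $D(\Lambda_{11})$, while $\tau/d'$ has self-pairing $1/d'$ and generates $D(\bZ\tau)$; summing gives a generator of $D(M_d)$ with the stated discriminant value $3/11 + 1/d'$.

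For part (b), let $w \in M_d \cap \Lambda_{11}^\perp$ be the primitive vector produced in the proof of Proposition~\ref{prop:intersection forms}. Comparing determinants yields $[M_d : \Lambda_{11} \oplus \bZ w] = 11$ and $w^2 = 11d$. With the canonical generators $\alpha = (h^3+\pi)/11$ of $D(\Lambda_{11}) \cong \bZ/11\bZ$ and $\beta = w/(11d)$ of $D(\bZ w) \cong \bZ/(11d)\bZ$, using the identity $\tilde\tau \cdot w = d$ for $\tilde\tau := (k(h^3+\pi)+w)/11$ one finds that the isotropic subgroup $H := M_d/(\Lambda_{11} \oplus \bZ w)$ of $D(\Lambda_{11} \oplus \bZ w)$ is generated by an element of the form $(\pm k,\pm d) \in \bZ/11\bZ \oplus \bZ/(11d)\bZ$, uniformly across the five residue classes. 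By the overlattice description $D(M_d) = H^\perp/H$, this reduces the problem to a finite calculation in $\bZ/11\bZ \oplus \bZ/(11d)\bZ$.

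A short computation shows that $(0,11) \in D(\Lambda_{11} \oplus \bZ w)$ lies in $H^\perp$ and lifts to $w/d \in M_d^\vee$, verified via $w/d \cdot \tau = \pm 1$ using $\tau \cdot w = \pm d$ depending on whether $\tau$ equals $\tilde\tau$, $\tilde\tau - \pi$, or $\pi - \tilde\tau$ in the basis choice. A congruence argument using $\gcd(k,11) = 1$ then shows that the image of $(0,11)$ in $H^\perp/H$ has order exactly $d$; since $|D(M_d)| = d$, this forces $D(M_d) \cong \bZ/d\bZ$ generated by $[w/d]$, with self-pairing $w^2/d^2 = 11/d$. The main obstacle is really just bookkeeping: the basis element $\tau$ of Proposition~\ref{prop:intersection forms} is related to $\tilde\tau$ by a $\pm\pi$ shift that depends on the residue class, so one must check case by case that $w/d \in M_d^\vee$ and that $[\tau]$ generates the claimed isotropic subgroup. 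These verifications are routine given the explicit formulas in Proposition~\ref{prop:intersection forms}.
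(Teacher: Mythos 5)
Your proof is correct and follows essentially the same route as the paper: part (a) by the orthogonal splitting $M_d=\Lambda_{11}\oplus\bZ\tau$, and part (b) by viewing $M_d$ as an index-$11$ overlattice of $\Lambda_{11}\oplus\bZ w$ and applying Nikulin's description $D(M_d)=H^\perp/H$. The paper simply states that the conclusion ``follows directly'' from \cite[Proposition 1.4.1]{nikulin}; your identification of the generator $[w/d]$ with $q(w/d)=w^2/d^2=11/d$ is exactly the computation being left implicit there.
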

\begin{proof}
Case $(a)$ follows directly from the fact that $M_d=\Lambda_{11}\oplus \langle \tau\rangle$. 
Similarly, in case $(b)$ we have that $\Lambda_{11}\oplus \langle w\rangle \subset M_d$ with index 11. 
It follows from Nikulin's theory of overlattices \cite[Proposition 1.4.1]{nikulin} that $D(M_d)=\bZ/d\bZ$, and the value of the discriminant form follows directly.
\end{proof}

\section{Associated varieties: K3 surfaces and cubic fourfolds}\label{sec:assoc K3/cubic}

In this section, we formalize the notion of a special trivector $\sigma$ having an associated K3 surface (Section~\ref{subsec:K3}) or an associated cubic fourfold (Section~\ref{subsec:cubicfourfold}). 
In both cases, we make explicit  necessary and sufficient conditions for the existence of such an associated variety.

\subsection{Associated K3 surfaces}\label{subsec:K3}
Let $\sigma\in \calP_{d}\setminus(\calP_{22}\cup \calP_{24})$ be a trivector marked by $N_d\subset\NS(\DV)$. 
Since in this case the Peskine sixfold $\pesk$ is smooth, it admits a marking $M_d\subset H^6(\pesk,\bZ)$. 
It sometimes happens that the transcendental cohomology of $\pesk$ matches that of a K3 surface, i.e. there exists a polarized K3 surface $(S,f)$ and a Hodge isometry 
\[
H^6(X_1^\sigma,\bZ)\supset M_d^\perp\overset{\sim}\longrightarrow f^\perp\subset H^2(S,\bZ)(-1).
\]
In this case, it is natural to regard $S$ as associated to $\pesk$. 

On the other hand, if $\sigma \in \calP_{24}\setminus \calP_{22}$, then $\pesk$ has isolated singularities and may not admit a marking. 
In that case, the Debarre--Voisin fourfold $\DV$ is still smooth, and we can instead use the marking $N_d$ on $\DV$ to identify an associated K3 surface. 
For that reason, we give the following definition of associated K3 surfaces:

\begin{definition}\label{defn: K3 associated}
    Let $\sigma\in \calP_{d}\setminus \calP_{22}$ for some $d$, and let $N_d\subset \NS(\DV)$ be the corresponding marking of $\DV$. 
    Then a polarized K3 surface $(S,f)$ of degree $d$ is \textbf{associated to $\sigma$} if there exists a Hodge isometry
    \[
    H^2(\DV,\bZ)\supset N_d^\perp\overset{\sim}\longrightarrow f^\perp\subset H^2(S,\bZ).
    \]
\end{definition}
In the case of $\sigma\in \calP_{22}$ the Debarre--Voisin manifold $\DV$ is singular but admits a symplectic resolution $\widetilde{\DV}$. Thus the the transcendental cohomology $T(\DV)$ will be isomorphic to that of the symplectic resolution. We say $\sigma$ has an associated K3 surface if the hyperk\"ahler manifold $\widetilde{\DV}$ does as above. In fact, such a $\sigma$ does have an associated K3: it was shown in \cite{DV} that $\widetilde{\DV}$ is birational to $S^{[2]}$ for such an associated K3.

The following lemma shows that both notions of an associated K3 surface (either via the cohomology of $\pesk$ or that of $\DV$) coincide when both varieties are smooth.

\begin{lemma}\label{lem: equiv defn assoc k3}
    Let $\sigma\in \calP_d\setminus (\calP_{22}\cup \calP_{24})$.  A K3 surface $(S,f)$ of degree $d$ is associated to $\sigma$ if and only if there is a marking $M_d\subset H^{3,3}(\pesk,\bC)\cap H^6(\pesk,\bZ)$ and a Hodge isometry
    \[
    H^6(\pesk,\bZ)\supset M_d^\perp\longrightarrow f^\perp(-1)\subset H^2(S,\bZ)(-1).
    \]
\end{lemma}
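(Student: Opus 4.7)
The plan is to use the Hodge isometry
\[
\phi:H^6(\pesk,\bZ)_{van}\xrightarrow{\sim}H^2(\DV,\bZ)_{prim}(-1)
\]
of Theorem~\ref{theo:hodge isometry} to compare the two orthogonal-complement Hodge structures $M_d^\perp$ and $N_d^\perp$, reducing the lemma to a formal composition of Hodge isometries.

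First, I would establish a correspondence between markings $M_d\supset\Lambda_{11}$ of $\pesk$ and markings $N_d\supset\langle H\rangle$ of $\DV$, both of discriminant $d$. Given $N_d$, let $N_{prim}:=N_d\cap H^\perp$, a saturated rank-one sublattice of $H^2(\DV,\bZ)_{prim}$, and set $M_{van}:=\phi^{-1}(N_{prim})\subset H^6(\pesk,\bZ)_{van}$. Since $N_{prim}\subset\NS(\DV)$ and $\phi$ is a Hodge isometry, the classes in $M_{van}$ are algebraic. Take $M_d$ to be the saturation of $\Lambda_{11}\oplus M_{van}$ inside $H^6(\pesk,\bZ)$; by construction $M_d\subset H^{3,3}(\pesk,\bC)\cap H^6(\pesk,\bZ)$. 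I expect the main technical obstacle to be verifying that $M_d$ has discriminant exactly $d$: this requires a Nikulin-style overlattice computation showing that the glue subgroup of $D(\Lambda_{11})\oplus D(M_{van})$ defining $M_d\supset\Lambda_{11}\oplus M_{van}$ matches, under the discriminant-form isomorphism induced by $\phi$, the glue subgroup of $D(\langle H\rangle)\oplus D(N_{prim})$ defining $N_d\supset\langle H\rangle\oplus N_{prim}$. This parallels the discriminant bookkeeping in the proofs of Proposition~\ref{prop:intersection forms} and Lemma~\ref{lemma:disc form}.

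Once this correspondence is in hand, the rest is formal. Since $\Lambda_{11}\subset M_d$, the complement $M_d^\perp\subset H^6(\pesk,\bZ)$ lies in $\Lambda_{11}^\perp=H^6(\pesk,\bZ)_{van}$ and coincides with the orthogonal complement of $M_{van}$ there; symmetrically, $N_d^\perp$ coincides with the orthogonal complement of $N_{prim}$ inside $H^2(\DV,\bZ)_{prim}$. Therefore $\phi$ restricts to a Hodge isometry
\[
\phi|_{M_d^\perp}:M_d^\perp\xrightarrow{\sim}N_d^\perp(-1).
\]
The lemma then follows by composition: a Hodge isometry $N_d^\perp\to f^\perp$ yields, after $(-1)$-twisting and precomposing with $\phi|_{M_d^\perp}$, a Hodge isometry $M_d^\perp\to f^\perp(-1)$; conversely, $(-1)$-twisting and precomposing with $(\phi|_{M_d^\perp})^{-1}$ produces $N_d^\perp\to f^\perp$ from $M_d^\perp\to f^\perp(-1)$.
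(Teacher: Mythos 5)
Your proposal is correct and follows essentially the same route as the paper: transport the marking through the Hodge isometry of Theorem~\ref{theo:hodge isometry}, identify $M_d^\perp$ with $N_d^\perp(-1)$, and conclude by composing isometries. The only point where you overcomplicate is the discriminant of $M_d$: no Nikulin glue-group computation is needed, because once you know $M_d^\perp = M_{van}^\perp\cap H^6(\pesk,\bZ)_{van}\cong N_d^\perp(-1)$, the unimodularity of $H^6(\pesk,\bZ)\cong I_{22,2}$ gives $\mathrm{disc}(M_d)=\mathrm{disc}(M_d^\perp)=d$ immediately, which is exactly the one-line argument the paper uses.
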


\begin{proof}
    Note for such a trivector $\sigma$, both $\pesk$ and $\DV$ are smooth. 
    Since $(S,f)$ is associated to $\sigma,$ there exists some marking $N_d\subset \NS(\DV)$ with $N_d^\perp\cong f^\perp$. 
    Note that $d$ is the discriminant of $N_d^\perp$. 
    Let $v$ be the generator of $N_d\cap \langle h\rangle^\perp\subset H^2(\DV,\bZ)_{prim}.$ 
    Then $v$ maps isometrically to some algebraic class $\tau\in H^6(\pesk,\bZ)_{van}$ by Theorem \ref{theo:hodge isometry}.
    We have the following Hodge isometries: 
    \begin{equation}\label{isom}
    (\tau)^\perp_{H^6(\pesk,\bZ)_{van}}\cong (v)^\perp_{H^2(\DV,\bZ)_{prim}}(-1)= N_d^\perp(-1) \subset H^2(\DV,\bZ)(-1).
    \end{equation}
    
    Let $M:=\Sat(\Lambda_{11}, \tau)\subset H^6(\pesk,\bZ).$ 
    Since $H^6(\pesk,\bZ)\supset M^\perp= \tau^\perp\subset H^6(\pesk,\bZ)_{van}$ and $H^6(\pesk,\bZ)$ is unimodular, the discriminant of $M$ must be $d$. 
    Hence $M_d:=M$ gives a labelling of discriminant $d$, and by construction the desired Hodge isometry holds.

    Conversely, suppose  $M_d\subset H^6(\pesk,\bZ)$ is a marking such that 
    \[
    M_d^\perp\cong f^\perp(-1)\subset H^2(S,\bZ)(-1)
    \]
    for some degree $d$ $K3$ surface $(S,f)$.  Then $M_d\cap H^6(\pesk,\bZ)_{van}$ is generated by an algebraic class $\tau,$ mapping isometrically to some algebraic class $v\in H^2(\DV,\bZ)_{prim}$ again via Theorem \ref{theo:hodge isometry}. One can run the Hodge isometries of Equation \ref{isom} backwards, to obtain a marking $N_d$ of $\DV$ and the desired Hodge isometry.
\end{proof}

We classify which special Peskine sixfolds arise from trivectors with associated K3 surfaces:

\begin{proposition}\label{prop:associated k3}
    Suppose $X_1^\sigma$ is a special Peskine sixfold of discriminant $d$.
    Then $\sigma$ has an associated K3 surface $(S,f)$ of degree $d$ if and only if $d$ is not divisible by $4$ or $121$, and every odd prime dividing $d$ is a square modulo $11$.
\end{proposition}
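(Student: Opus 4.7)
The plan is to reduce the existence of an associated K3 surface to a lattice isomorphism via Nikulin's uniqueness theorem, and then extract the arithmetic conditions from a congruence analysis.

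First, I would use Definition~\ref{defn: K3 associated} and Lemma~\ref{lem: equiv defn assoc k3}, together with the surjectivity of the K3 period map, to show that having an associated K3 surface $(S, f)$ of degree $d$ is equivalent to the existence of an abstract lattice isometry $N_d^\perp \cong f^\perp$. Both lattices are even of signature $(2, 19)$ and rank $21$, with $D(f^\perp) \cong (\bZ/d\bZ, -1/d)$. Since the discriminant group is generated by a single element --- fewer than $\mathrm{rank} - 2 = 19$ --- Nikulin's uniqueness theorem for even indefinite lattices will imply that the isomorphism class is determined by signature and discriminant form, reducing the problem to comparing discriminant forms.

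Via the isometry $N_d^\perp \cong M_d^\perp(-1)$ of Theorem~\ref{theo:hodge isometry} together with Lemma~\ref{lemma:disc form}, I would then transfer the computation to the Peskine side and proceed by case. If $121 \mid d$, then $D(M_d)$ is non-cyclic and cannot match cyclic $D(f^\perp)$. If $11 \mid d$ but $121 \nmid d$, then $D(M_d)$ is cyclic of order $d$ with form value $(3d' + 11)/d$; when $4 \nmid d$, one has $d' \equiv 2 \pmod 4$, and the required congruence $(3d' + 11) k^2 \equiv -1 \pmod{2d}$ reduces modulo $4$ to $k^2 \equiv 3 \pmod 4$, which has no solution, so no associated K3 exists. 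This is consistent with the proposition, as $\left(\frac{11}{11}\right) = 0$ disqualifies the case $11 \mid d$. The remaining case is $11 \nmid d$, where $D(M_d) \cong (\bZ/d\bZ, 11/d)$, and the matching condition becomes the existence of $k$ coprime to $d$ with $-11 \equiv k^2 \pmod{2d}$.

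Finally, I would analyze this congruence prime by prime. Modulo $8$, $-11 \equiv 5$ is not a quadratic residue, forcing $4 \nmid d$. For each odd prime $p \mid d$ (necessarily $p \ne 11$), Hensel's lemma reduces the problem to $\left(\frac{-11}{p}\right) = 1$; by quadratic reciprocity, $\left(\frac{-11}{p}\right) = \left(\frac{p}{11}\right)$, so the condition becomes $p$ being a square modulo $11$. Combining these gives the stated conditions.

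The main obstacle will be correctly lifting the discriminant form of $M_d$ from $\bQ/\bZ$ (where it is canonically defined, since $M_d$ is an odd lattice containing $h^3$ of square $15$) to $\bQ/2\bZ$ (where it is needed to invoke Nikulin's theorem). This lift must be determined via the evenness of $M_d^\perp$, which is inherited through Theorem~\ref{theo:hodge isometry} from the even lattice $\bL$ on the $\DV$-side; precise tracking of this lift is what governs the sign and parity of the discriminant form used throughout the case analysis.
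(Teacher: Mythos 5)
Your overall strategy (reduce to an abstract isometry via Torelli/surjectivity and Nikulin's uniqueness theorem, then compare discriminant forms prime by prime) is the same as the paper's, and your treatment of the cases $121\mid d$ and $11\nmid d$ reproduces the paper's argument correctly, including the congruence $-11\equiv k^2\bmod 2d$, the exclusion of $4\mid d$ via reduction mod $8$, and the quadratic-reciprocity step $\left(\frac{-11}{p}\right)=\left(\frac{p}{11}\right)$.

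However, your handling of the case $11\mid d$, $121\nmid d$ is wrong, and the error is fatal to that case. You conclude that no associated K3 surface exists there and assert this is consistent with the proposition because $\left(\frac{11}{11}\right)=0$; but the proposition (see the list of examples following it and Table 1) explicitly includes $d=66$ among the discriminants with an associated K3 surface. One can check this directly on the Debarre--Voisin side, where all lattices are even: for $d=66$ the marking corresponds to a primitive $v_0\in h^\perp$ with $v_0^2=-6$ and $\div_{h^\perp}(v_0)=1$, and a direct computation gives $D(N_{66}^\perp)\cong(\bZ/66\bZ,-\tfrac1{66})$, which is literally the discriminant form of $f^\perp$ for a degree-$66$ polarization; Nikulin then yields the isometry. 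The source of your contradiction is exactly the obstacle you flag in your final paragraph but do not resolve: since $M_d$ is an odd lattice, the value $\tfrac3{11}+\tfrac1{d'}$ is only well defined in $\bQ/\bZ$, and the correct even lift of $q_{M_d^\perp}$ differs from the naive negation by $1\in\bQ/2\bZ$ (the glue vectors in the odd unimodular lattice $I_{22,2}$ may have odd square). With the correct lift the congruence becomes $k^2\equiv 2d'-11\bmod 2d$ (e.g.\ $k^2\equiv1\bmod132$ for $d=66$), the spurious obstruction $k^2\equiv3\bmod4$ disappears, and one must then carry out the mod-$11$ and mod-$p_i$ analysis that produces the stated conditions (every odd prime $p_i\mid d'$ a square mod $11$, and $2\,\|\,d$) rather than discarding the case. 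As written, your proposal both misreads the statement (it does not exclude $11\mid d$) and would prove a false exclusion for discriminants such as $66$.
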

\begin{proof}
The case of $d=22$ was studied originally in \cite{DV}, - both  the cases of $d=22$ and $d=24$ are also covered in \cite[Proposition~2.4, Theorem~4.13]{BenedettiSong}, so we assume $d>24$ . 
In particular, $\pesk$ is smooth, and there exists a marking $\Lambda_{11}\subset M_d\subset H^{3,3}(\pesk)\cap H^6(\pesk,\bZ).$
By the Torelli theorem, $X_1^\sigma$ has an associated polarized K3 surface if and only if there is some class $f$ in the K3 lattice $\Lambda_{\mathrm{K3}}(-1):=U^3\oplus E_8^2$ such that $M_d^\perp\cong f^\perp$. 
Such an isometry exists if and only if there is an isomorphism $D(M_d^\perp)\to D(f^\perp)$ respecting the discriminant form on each group. 

Writing $f=u+\frac{d}2v$ where $u$ and $v$ are standard generators for a hyperbolic factor of $\Lambda_{\mathrm{K3}}$, we see $D(f^\perp)\cong\bZ/d\bZ$. 
Moreover, for any generator $w$ of $D(f^\perp)$, the discriminant form $q$ takes value $q(w)=\frac{k^2}{d}\in\bQ/2\bZ$ for some integer $k$.

First, suppose $d\not\equiv0\bmod22$. 
By Lemma~\ref{lemma:disc form}, $D(M_d^\perp)\cong\bZ/d\bZ$, and the discriminant form takes value $-\frac{11}d$ on one of the generators. 
Thus, for $X_1^\sigma$ to have an associated K3 surface, we would need $-\frac{11}d=\frac{k^2}d\in\bQ/2\bZ$, i.e. for $-11$ to be a square modulo $2d$. 
Writing $d=2^\ell p_1^{e_1}\dots p_m^{e_m}$ for distinct odd primes $p_i\neq11$, this is equivalent to saying $-11$ is a square modulo $2^{\ell+1}$ and modulo $p_i$ for each $i$.

If $\ell\ge2$, then reducing modulo $8$ would yield $5=k^2\bmod8$, a contradiction. 
Hence $\ell=1$. By quadratic reciprocity,
\[
\left(\dfrac{-11}{p_i}\right)=\left(\dfrac{-1}{p_i}\right)\left(\dfrac{11}{p_i}\right)=(-1)^{\frac{p_i-1}{2}}(-1)^{\frac{p_i-1}{2}}\left(\dfrac{p_i}{11}\right),
\]
so $-11$ is a square modulo $p_i$ if and only if $p_i$ is a square modulo $11$. This completes the first case.

Now, suppose $d\equiv0\bmod22$. Since $D(f^\perp)$ is cyclic, Lemma~\ref{lemma:disc form} shows that $X_1^\sigma$ can only have an associated K3 surface if $11^2\nmid d$. 
Writing $d=11d'$ and arguing as before, $X_1^\sigma$ has an associated K3 surface if and only if there is some integer $k$ for which $\frac3{11}+\frac1{d'}=-\frac{k^2}{d}\in\bQ/2\bZ$. 
In other words, $X_1^\sigma$ has an associated K3 surface if and only if
\[
-3d'-11=k^2\bmod2d.
\]
Again, write $d=2^\ell\cdot11p_i^{e_1}\dots p_m^{e_m}$ for distinct odd primes $p_i\neq11$. 
We must show $-3d'-11$ is a square modulo $2^\ell$, $11$, and each $p_i$. 
Reducing modulo $p_i$, we see as before that $-3d'-11$ is a square modulo $p_i$ if and only if $p_i$ is a square modulo $11$. 
In that case, reducing modulo $11$ yields $-3d'\equiv k^2\bmod 11$, so
\[
1=\left(\dfrac{-3d'}{11}\right)=\left(\dfrac{-3}{11}\right)\left(\dfrac{2}{11}\right)^\ell\left(\dfrac{p_1}{11}\right)^{e_1}\dots\left(\dfrac{p_m}{11}\right)^{e_m}=(-1)^{\ell+1},
\]
so $\ell$ is odd. 
Reducing modulo $8$ shows $\ell<3$, so $\ell=1$, completing the argument for the second case.
\end{proof}

The first few examples of integers $d$ for which the general Peskine sixfold of discriminant $d$ has an associated K3 surface are $d=30$, $46$, $50$, $54$, $62$, $66$, $74$, $90$, and $94$.

\subsection{Associated cubic fourfolds}\label{subsec:cubicfourfold}

The transcendental cohomology of a Peskine sixfold can also be embedded into the primitive cohomology of a cubic fourfold, so we also develop the notion of associated cubic fourfolds. 
For a smooth cubic fourfold $Y$, one defines the lattice of algebraic cycles
\[
A(Y)=H^4(Y,\bZ)\cap H^{2,2}(Y,\bC),
\]
and the primitive cohomology 
\[
H^4(Y,\bZ)_{prim}=\langle\eta_Y\rangle^\perp\subset H^4(Y,\bZ),
\]
where $\eta_Y$ is the square of the hyperplane class. Let
\[
A(Y)_{prim}=H^4(Y,\bZ)_{prim}\cap A(Y).
\]
There are countably many divisors $\calC_d$ in the moduli space of cubic fourfolds parametrizing those with $A(Y)_{prim}\neq0$ \cite{Hassettcubicslong}.
Whenever $\eta_Y\in K_d\subset A(Y)$ where $K_d$ is a primitive sublattice of rank $2$ and discriminant $d$, one says $Y\in\calC_d$, and $K_d$ is called a marking of $Y$.

We make the following definition, analogous to Definition \ref{defn: K3 associated}:

\begin{definition}\label{defn: cubic associated}
        Let $\sigma\in \calP_{d}\setminus \calP_{22}$ for some $d$, and let $N_d\subset \NS(\DV)$ be the corresponding marking of $\DV.$  Then a marked cubic fourfold $(Y,K_d)$ is \textbf{associated to $\sigma$} if
        there exists a Hodge isometry
        \[
        H^2(\DV,\bZ)\supset N_d^\perp\overset{\sim}\longrightarrow K_d^\perp(-1)\subset H^4(Y,\bZ)(-1).
        \]
\end{definition}
    
As with associated K3 surfaces, the following lemma shows that when $\pesk$ is smooth and of the expected dimension, the existence of a cubic fourfold associated to $\sigma$ is equivalent to the natural notion of the existence of a cubic fourfold associated to $\pesk$. 
The proof is nearly identical to that of Lemma \ref{lem: equiv defn assoc k3}.

\begin{lemma}
    Let $\sigma\in \calP_d\setminus (\calP_{22}\cup \calP_{24})$. Then there is a marked cubic fourfold $(Y,K_d)$ associated to $\sigma$ if and only if
     there is a marking $M_d\subset H^{3,3}(\pesk,\bC)\cap H^6(\pesk,\bZ)$ and a Hodge isometry
    \[
    H^6(\pesk,\bZ)\supset M_d^\perp\overset{\sim}\longrightarrow K_d^\perp\subset H^4(Y,\bZ).
    \]
\end{lemma}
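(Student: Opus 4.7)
The plan is to transplant the proof of Lemma~\ref{lem: equiv defn assoc k3} to the cubic fourfold setting, with minor bookkeeping to accommodate the different twist conventions in Definitions~\ref{defn: K3 associated} and~\ref{defn: cubic associated}. The two essential ingredients are the same as in the K3 case: Theorem~\ref{theo:hodge isometry} gives a Hodge isometry $H^6(\pesk,\bZ)_{van} \cong H^2(\DV,\bZ)_{prim}(-1)$, and the integral Hodge conjecture for $\pesk$ (recalled in Section~\ref{sec: coh of pesk}) ensures that any integral class of type $(3,3)$ on $\pesk$ is algebraic. Together, these will allow me to transport a distinguished rank-one sublattice across the correspondence in either direction.

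For the forward direction, I would begin with a marked cubic fourfold $(Y,K_d)$ associated to $\sigma$ and its corresponding marking $N_d\subset\NS(\DV)$. Let $v$ be a generator of the rank-one sublattice $N_d\cap\langle h\rangle^\perp\subset H^2(\DV,\bZ)_{prim}$, and let $\tau\in H^6(\pesk,\bZ)_{van}$ be its image under the Hodge isometry of Theorem~\ref{theo:hodge isometry}; this $\tau$ is automatically algebraic. I would then assemble the chain of Hodge isometries
\begin{equation*}
\tau^\perp_{H^6(\pesk,\bZ)_{van}} \;\cong\; v^\perp_{H^2(\DV,\bZ)_{prim}}(-1) \;=\; N_d^\perp(-1) \;\cong\; K_d^\perp \;\subset\; H^4(Y,\bZ),
\end{equation*}
where the last isomorphism comes from untwisting Definition~\ref{defn: cubic associated}. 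Setting $M_d := \Sat(\Lambda_{11},\tau)\subset H^6(\pesk,\bZ)$ produces a candidate marking, and unimodularity of $H^6(\pesk,\bZ)\cong I_{22,2}$ forces $M_d^\perp = \tau^\perp$ inside the vanishing cohomology; in particular $M_d$ has discriminant $d$ and the desired Hodge isometry $M_d^\perp\cong K_d^\perp$ is immediate.

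For the converse, I would reverse this argument. Starting from a marking $M_d$ and an isometry $M_d^\perp\cong K_d^\perp$, the rank-one sublattice $M_d\cap H^6(\pesk,\bZ)_{van}$ is generated by an algebraic class $\tau$, and Theorem~\ref{theo:hodge isometry} sends $\tau$ to an algebraic class $v\in H^2(\DV,\bZ)_{prim}$. Taking $N_d := \langle h, v\rangle\subset\NS(\DV)$ and running the chain of Hodge isometries backwards recovers $N_d^\perp\cong K_d^\perp(-1)$, as required by Definition~\ref{defn: cubic associated}.

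The only subtle point, and the step I would treat most carefully, is verifying that the sublattice $N_d$ built in the converse direction is genuinely \emph{saturated} of rank two with discriminant $d$. This should be inherited from the saturation and discriminant of $M_d$ via unimodularity of $H^6(\pesk,\bZ)$ and the fact that Theorem~\ref{theo:hodge isometry} is an isomorphism of polarized integral Hodge structures, but keeping careful track of how $h$, $\Lambda_{11}$, and the $(-1)$-twists interact is where the bookkeeping happens. All other steps run in parallel to the proof of Lemma~\ref{lem: equiv defn assoc k3}, as the authors already note.
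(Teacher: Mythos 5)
Your proposal is correct and follows essentially the same route as the paper, which itself proves this lemma by declaring the argument ``nearly identical'' to that of Lemma~\ref{lem: equiv defn assoc k3}: transport the rank-one algebraic class across the Hodge isometry of Theorem~\ref{theo:hodge isometry}, saturate against $\Lambda_{11}$ (resp.\ $\langle h\rangle$), and use unimodularity of $H^6(\pesk,\bZ)$ to control discriminants. The twist bookkeeping you flag ($N_d^\perp(-1)\cong K_d^\perp$ versus the untwisted isometry in the statement) is handled exactly as you describe.
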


We classify which special Peskine sixfolds arise from trivectors  with associated cubic fourfolds:

\begin{proposition}\label{prop:associated cubic}
    Suppose $\pesk$ is a special Peskine sixfold of discriminant $d$. 
    Then $\sigma$ has an associated cubic fourfold $(Y,K_d)$ of discriminant $d$ if and only if $d$ satisfies the following conditions:
    \begin{itemize}
        \item[(a)] $d\equiv0$ or $2\bmod6$,
        \item[(b)] $d$ is not divisible by $9$ or $121$,
        \item[(c)] $33$ is a square modulo each prime divisor of $d$, and
        \item[(d)] if $d\equiv0\bmod66$, then also $8\mid d$, and $d$ has an odd number of prime divisors equivalent to $2$ modulo $3$, counting multiplicity.
    \end{itemize}
\end{proposition}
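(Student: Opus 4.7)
The plan is to mirror the proof of Proposition \ref{prop:associated k3}, now with the K3 lattice replaced by the middle cohomology lattice of a cubic fourfold. By the cubic-fourfold analog of Lemma \ref{lem: equiv defn assoc k3} stated immediately before the proposition, together with global Torelli for cubic fourfolds and Nikulin's existence/uniqueness results for lattice embeddings \cite{nikulin}, the existence of an associated cubic fourfold $(Y, K_d)$ is equivalent to an abstract Hodge isometry $M_d^\perp \cong K_d^\perp(-1)$. By unimodularity of $H^6(\pesk, \bZ) \cong I_{22,2}$ and $H^4(Y, \bZ) \cong I_{21,2}$, this reduces to matching the discriminant forms of $M_d$ and $K_d$ (up to a sign absorbing the $(-1)$-twist).

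Next I would apply Hassett's classification \cite{Hassettcubicslong}: a primitive rank-$2$ sublattice $K_d \subset H^4(Y, \bZ)$ containing $\eta_Y$ of discriminant $d$ exists precisely when $d \equiv 0, 2 \pmod 6$, yielding condition (a), and has Gram matrix
\[
\begin{pmatrix} 3 & 1 \\ 1 & (d+1)/3 \end{pmatrix} \quad (d \equiv 2 \bmod 6), \qquad \begin{pmatrix} 3 & 0 \\ 0 & d/3 \end{pmatrix} \quad (d \equiv 0 \bmod 6).
\]
Direct computation from these Gram matrices gives $D(K_d) \cong \bZ/d\bZ$ with form $3/d$ on a generator in the first case, and $D(K_d) \cong \bZ/3\bZ \oplus \bZ/(d/3)\bZ$ with forms $1/3$ and $3/d$ on generators in the second. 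Comparing with $(D(M_d), q_{M_d})$ from Lemma \ref{lemma:disc form}, broken up case by case according to $d \bmod 22$ and $d \bmod 6$, the matching condition in the coprime-cyclic subcases becomes a congruence of the form $11 \equiv 3 k^2 \pmod d$, equivalently $33$ is a square modulo $d$. By CRT and quadratic reciprocity this is equivalent to condition (c).

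Conditions (b) and (d) arise from obstructions outside the coprime-cyclic case. If $9 \mid d$, then the $3$-part of $D(K_d)$ has rank $\ge 2$ while the $3$-part of $D(M_d)$ is cyclic (when $121 \nmid d$); symmetrically if $121 \mid d$, the $11$-part of $D(M_d)$ has rank $\ge 2$. This yields condition (b). The main obstacle is the mixed case $d \equiv 0 \pmod{66}$, where $D(M_d) \cong \bZ/11\bZ \oplus \bZ/(d/11)\bZ$ and $D(K_d) \cong \bZ/3\bZ \oplus \bZ/(d/3)\bZ$ both carry nontrivial rank-$2$ structure and must be matched prime by prime. Because $M_d$ and $K_d$ are odd lattices, matching at the prime $2$ requires tracking the oddity/parity invariants in addition to the bilinear discriminant form, using the $2$-adic genus formalism of \cite{nikulin}. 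Unwinding these $2$-adic compatibilities produces both the divisibility $8 \mid d$ and the global parity condition on the number of primes $p \mid d$ with $p \equiv 2 \pmod 3$, forming condition (d).
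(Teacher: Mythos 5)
Your overall strategy---global Torelli plus a discriminant-form comparison, reduced by unimodularity and Nikulin's theory to prime-by-prime congruences---is exactly the paper's, and the coprime-cyclic cases would go through essentially as you describe (with the standard caveat that the $\bQ/2\bZ$-valued discriminant form lives on the \emph{even} complements $M_d^\perp$ and $K_d^\perp$, not on the odd lattices $M_d$ and $K_d$; the paper works with the complements throughout, using \cite[Proposition~3.2.5]{Hassettcubicslong} for $D(K_d^\perp)$). The genuine gap is in condition (d). You assert that both the divisibility $8\mid d$ and the parity condition on primes $p\equiv2\bmod3$ fall out of ``$2$-adic compatibilities'' and the oddity/parity invariants of odd lattices. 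That is a misattribution, and no genus-symbol formalism is needed: writing $d=66d'$, one must solve the single explicit congruence $k^2(3-44d')\equiv18d'+11\bmod2d$ place by place. The condition $8\mid d$ does come from the prime $2$, but concretely: for $d'$ odd the congruence reduces to $3k^2\equiv1\bmod4$ and for $d'\equiv2\bmod4$ to $3k^2\equiv7\bmod8$, neither solvable, while for $4\mid d'$ a solution modulo $16$ lifts by Hensel. The parity condition, by contrast, has nothing to do with the prime $2$: it is the solvability condition modulo $11$ (namely $3k^2\equiv7d'\bmod11$, i.e.\ $d'$ a non-square mod $11$) and modulo $3$, which, after quadratic reciprocity converts the local conditions $\left(\frac{3}{p_i}\right)=\left(\frac{11}{p_i}\right)$ at the remaining primes, becomes the stated parity of the number of prime divisors of $d$ congruent to $2$ mod $3$. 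As written, your argument does not actually derive (d), and following your pointer (the prime $2$) would not find the parity condition.

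A second, smaller omission concerns sufficiency of (a)--(c) in the cases with $66\nmid d$. There the congruence must also be checked at the primes $3$ and $11$ themselves, and the paper shows these residual conditions are \emph{automatic} given solvability at the other primes, using Lemma~\ref{prop:d mod 22} (for instance, when $6\mid d$ and $22\nmid d$, the fact that $3$ is a square mod $11$ while $d$ is not forces $\left(\frac{d/3}{3}\right)=-1$, which is exactly what makes the mod-$3$ congruence solvable; a similar check handles the mod-$11$ congruence when $22\mid d$ and $d\equiv2\bmod6$). Without these verifications your argument establishes only that (a)--(c) are necessary, not that they suffice.
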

\begin{proof}
    We argue as in Proposition~\ref{prop:associated k3}. 
    When $\sigma \not\in \calP_{22}\cup \calP_{24},$ by the Torelli theorem, $X_1^\sigma$ has an associated cubic fourfold $Y$ if and only if $X_1^\sigma$ has a marking $M_d$ of discriminant $d$ such that 
    \[
    M_d^\perp\cong K_d^\perp\subset I_{21,2}
    \]
    for some saturated lattice $K_d$ of rank 2 containing a class $h$ of square $3$. 
    Automatically, this means $d=\mathrm{disc}(K_d)$. 
    In the case of $d=24$, we have $N_d^\perp\cong K_d^\perp$, and the conclusion stays the same.
    Cubic fourfolds can only have markings of discriminant $d\equiv0$ or $2\bmod 6$, and we consider each case separately.

    \begin{itemize}
        \item[\bf Case 1:] $d\equiv2\bmod6$, and $d\not\equiv0\bmod22$. 
        Then by Lemma~\ref{lemma:disc form}, $D(M_d^\perp)$ is cyclic, with a generator of square $-\frac{11}d$; similarly, $D(K_d^\perp)$ is cyclic with a generator of square $\frac{2d-1}{3d}$ \cite[Proposition 3.2.5]{Hassettcubicslong}.
        An isomorphism of quadratic groups between $D(M_d^\perp)$ and $D(K_d^\perp)$ sends a generator to a generator, and any generator of $D(M_d^\perp)$ has square $-\frac{11k^2}{d}$ for some integer $k$.
        So, such an isomorphism exists when there is a solution $k\in\bZ$ to the equation
        \[
        -\frac{11k^2}d=\frac{2d-1}{3d}\in\bQ/2\bZ.
        \]
        Clearing denominators, this is equivalent to requiring that
        \[
        -33k^2\equiv 2d-1 \bmod6d.
        \]
        Write $d=2^\ell p_1^{e_1}\dots p_m^{e_m}$ for distinct odd primes $p_i$. 
        To solve the equation above for $k$ is equivalent to solving the equation modulo $2^{\ell+1}$ and $p_i$ for all $i$. 
        If $\ell=1$, then the equation is satisfied modulo $4$; if $\ell\ge2$, then the equation reduces to $k^2\equiv1\bmod8$, and a solution can be lifted to $\bZ/2^{\ell+1}\bZ$.
        
        Reducing modulo $p_i$, we have $33k^2\equiv1\bmod p_i$, which has a solution if and only if $33$ is a square modulo $p_i$.
        
        \item[\bf Case 2:] $d\equiv0\mod6$, and $d\not\equiv0\mod22$. 
        Again, $D(M^\perp_d)$ or $D(N^\perp_{24})$ when $d=24$) is cyclic, with a generator of square $-\frac{11}d$. 
        For $D(K^\perp_d)$  to be cyclic, we must have $9\nmid d$; in that case, $D(K^\perp_d)$ has a generator of square $\frac23-\frac3d$. 
        So, we are looking for a solution $k$ to 
        \[
        -\frac{11k^2}{d}=\frac23-\frac3d\in\bQ/2\bZ,
        \]
        i.e.
        \[
        -11k^2=2e-3\mod2d,
        \]
        where $e=d/3$.
        As before, write $d=2^\ell 3p_1^{e_1}\dots p_m^{e_m}$ for distinct primes $p_i\neq2,3,11$. 
        A solution exists modulo $4$ (if $\ell=1$) or can be lifted from a solution modulo $8$ (if $\ell\ge2$), so the equation has a solution if and only if it has a solution modulo $3$ and $p_i$ for each $i$. 

        Reducing modulo $p_i$, we have $11k^2=3\mod p_i$, so the equation has a solution modulo $p_i$ if and only if
        \[
        \left(\dfrac{11}{p_i}\right)=\left(\dfrac{3}{p_i}\right),
        \]
        i.e. 
        \[
        \left(\dfrac{33}{p_i}\right)=1,
        \]
        meaning $33$ is a square modulo $p_i$.

        Finally, we show that if there is a solution modulo $p_i$ for each $i$, then there is automatically a solution modulo $3$. 
        Using the above, along with quadratic reciprocity, we have
        \[
        \left(\dfrac{p_i}{11}\right)=\left(\dfrac{p_i}{3}\right).
        \]
        Since $3$ is a square modulo $11$ but $d$ is not (see Proposition~\ref{prop:d mod 22}), we get that $e=d/3$ is not a square modulo $11$, and it follows that
        \begin{align*}
        -1&=\left(\dfrac{e}{11}\right)=\left(\dfrac{2}{11}\right)^\ell\left(\dfrac{p_1}{11}\right)^{e_1}\dots\left(\dfrac{p_m}{11}\right)^{e_m}=\left(\dfrac{2}{3}\right)^\ell\left(\dfrac{p_1}{3}\right)^{e_1}\dots\left(\dfrac{p_m}{3}\right)^{e_m}=\left(\dfrac{e}{3}\right).
        \end{align*}
        Hence $2e$ is a square modulo $3$, and the reduction of our equation modulo $3$, i.e. $k^2=2e\mod3$, has a solution.

        \item[\bf Case 3:] $d\equiv2\bmod6$ and $d\equiv0\bmod22$. 
        As in case $1$, $D(K_d^\perp)$ is cyclic, with a generator of square $\frac{2d-1}{3d}$. 
        For $D(M_d^\perp)$ to be cyclic, we must have $11^2\nmid d$, and in that case, there is a generator of square $-\frac3{11}-\frac{11}{d}$. 
        An isomorphism of quadratic groups requires an integer $k$ such that
        \[
        \frac{k^2(2d-1)}{3d}=-\frac3{11}-\frac{11}d\in\bQ/2\bZ.
        \]
        Writing $e=\frac{2d-1}3$ and $d=11d'=11\cdot2^\ell p_1^{e_1}\dots p_m^{e_m}$ for distinct primes $p_i\neq2,3,11$, this is equivalent to requiring that
        \[
        k^2e\equiv-3d'-11\bmod2d.
        \]
        The equation above has a solution if and only if 
        \[
        k^2(2d-1)\equiv-9d'-33\bmod6d
        \]
        has a solution modulo $2^{\ell+1}$, $11$, and each $p_i$.

        Reducing the equation modulo $p_i$ yields $k^2\equiv33\bmod p_i$.
        Similarly, another application of Hensel's lemma implies that $33$ is a square modulo each power of $2$, and thus this equation has a solution modulo $2^{\ell+1}$.

        If the equation has a solution modulo $p_i$ for each $i$, then 
        \[
        1=\left(\dfrac{33}{p_i}\right)=\left(\dfrac{3}{p_i}\right)\left(\dfrac{11}{p_i}\right)=\left(\dfrac{p_i}{3}\right)\left(\dfrac{p_i}{11}\right),
        \]
        so
        \begin{align*}
        \left(\dfrac{d'}{11}\right)&=\left(\dfrac{2}{11}\right)^\ell\prod_{i=1}^m\left(\dfrac{p_i}{11}\right)^{e_i}\\
        &=(-1)^\ell\prod_{i=1}^m\left(\dfrac{p_i}{3}\right)^{e_i}\\
        &=(-1)^\ell\prod_{p_i\equiv2\bmod3}(-1)^{e_i}.
        \end{align*}
        Modulo $11$, the equation reads $k^2\equiv9d'\bmod11$. 
        By the above, for this equation (and the others) to have a solution  means that the number of prime divisors of $d$ equivalent to $2$ modulo $3$ (counting multiplicity, and including $2$ and $11$) is odd. 
        But this condition is satisfied automatically since $d\equiv2\bmod3$.
        
        \item[\bf Case 4:] $d\equiv0\bmod66$. 
        Then $D(M_d^\perp)\cong\bZ/11\bZ\times\bZ/6d'\bZ$, and $D(K_d^\perp)\cong\bZ/3\bZ\times\bZ/22d'\bZ$ where $d=66d'$. 
        For these groups to be isomorphic, they must both be cyclic, which is the case when $3,11\nmid d'$. 
        In that case, finding an isomorphism of discriminant groups compatible with the discriminant form is equivalent to solving the equation 
        \[
        k^2\left(\frac23-\frac3d\right)=-\frac3{11}-\frac{11}d\in\bQ/2\bZ,
        \]
        i.e. 
        \[
        k^2(3-44d')\equiv18d'+11\bmod2d.
        \]
        Write $d'=2^\ell p_1^{e_1}\dots p_m^{e_m}$ for distinct primes $p_i\neq2,3,11$. 
        We need to check when  there is a solution modulo $2^{\ell+2}$, $3$, $11$, and $p_i$ for each $i$.

        Taking the equation modulo $p_i$, we have $3k^2\equiv11\bmod p_i$, which has a solution if and only if 
        \[
        \left(\dfrac{3}{p_i}\right)=\left(\dfrac{11}{p_i}\right),
        \]
        in which case $33$ is a square modulo $p_i$. 
        By quadratic reciprocity, this is also equivalent to
        \[
        \left(\dfrac{p_i}{3}\right)=\left(\dfrac{p_i}{11}\right).
        \]
        
        Reducing modulo $11$, we obtain $3k^2=7d'\bmod11$, which has a solution if and only if $d'$ is not a square modulo $11$. 
        Assuming the equation above for each $p_i$, this means
        \begin{align*}
        -1&=\left(\dfrac{d'}{11}\right)=\left(\dfrac{2}{11}\right)^\ell\left(\dfrac{p_1}{11}\right)^{e_1}\dots\left(\dfrac{p_m}{11}\right)^{e_m}\\
        &=(-1)^\ell\left(\dfrac{p_1}{3}\right)^{e_1}\dots\left(\dfrac{p_m}{3}\right)^{e_m}\\
        &=(-1)^\ell\prod_{p_i\equiv2\bmod3}(-1)^{e_i},
        \end{align*}
        i.e. the number of prime divisors of $d$ equivalent to $2$ modulo $3$ (counting multiplicity, and including $2$ and $11$), is odd.

        Finally, reducing modulo $3$, the equation becomes 
        \[
        k^2d'\equiv-1\bmod3.
        \]
        This equation also has a solution if and only if the number of prime divisors of $d$ equivalent to $2$ modulo $3$ (counting multiplicity) is odd.

        It remains to check when the equation has a solution modulo $2^{\ell+2}$. 
        If $\ell=0$, then the equation reduces to $3k^2\equiv1\bmod4$ (using $d'\equiv1$ or $3\bmod4$, since $d'$ is odd in this case), which has no solution; similarly, if $\ell=1$, then the equation reduces to $3k^2\equiv7\bmod8$, which also has no solution. 
        If $\ell\ge2$, then modulo $16$, the equation becomes
        \[
        3k^2\equiv3\text{ or }11\bmod16,
        \]
        and one can lift a solution to $\bZ/2^{\ell+2}\bZ$ using Hensel's lemma.
        \end{itemize}
\end{proof}

For example, Peskine sixfolds of discriminants $d=24$, $32$, $44$, $62$, $68$, $74$, and $96$ all have associated cubic fourfolds. 

\begin{remark}\label{remark:birfano}
    One way that a cubic fourfold $Y$ can be associated to a Peskine sixfold $\pesk$ is if the Fano variety of lines $F(Y)$ is birational to $\DV$, which we explain here. 
    A birational map $F(Y)\dashrightarrow \DV$ induces a Hodge isometry $H^2(F(Y),\bZ)\cong H^2(\DV,\bZ)$, restricting to a Hodge isometry on transcendental cohomology. 
    This yields isomorphisms of Hodge structures:
    \[
    T(Y)\cong T(F(Y))(-1)\cong T(\DV)(-1)\cong T(\pesk).
    \]
    If $\sigma\in \calP_d$ is very general, it follows that $Y$ is associated to the corresponding Peskine sixfold $\pesk$.
    However, Table 1 shows that a birational equivalence between $X_6^\sigma$ and $F(Y)$ is not necessary for $X^\sigma_1$ and $Y$ to be associated. 
    It seems likely that geometric explanations for cubic fourfolds associated to Peskine sixfolds vary widely across different discriminants, as is the case with K3 surfaces associated to cubic fourfolds.
\end{remark}

For several low discriminants and for higher discriminants displaying the remaining possible behavior, Table~\ref{tab:discriminants} records whether the general Peskine sixfold $\pesk$ of discriminant $d$ has an associated K3 surface and/or cubic fourfold as well as the related conditions of whether the associated Debarre--Voisin variety $\DV$ is birational to the Hilbert square of a K3 surface and/or to the Fano variety of lines on a cubic fourfold.

\newcolumntype{Y}{>{\centering\arraybackslash}X}

\begin{table}[h!]
\centering
\caption{Discriminants $d$ and their associated geometries.}
\label{tab:discriminants}
\begin{tabularx}{\textwidth}{|c|Y|Y|Y|Y|}
\hline
$d$ & \textbf{Associated K3 surface $S$} & \textbf{Associated cubic fourfold $Y$} & \textbf{$\DV$ birational to ${\rm Hilb}^{[2]}(S)$} & \textbf{$\DV$ birational to $F(Y)$} \\
\hline
22 & \checkmark & & \checkmark & \\
\hline
24 & & \checkmark & & \checkmark \\
\hline
28 & & & & \\
\hline
30 & \checkmark & & \checkmark & \\
\hline
32 & & \checkmark & & \checkmark \\
\hline
40 & & & & \\
\hline
44 & & \checkmark & & \checkmark \\
\hline
46 & \checkmark & & \checkmark & \\
\hline
50 & \checkmark & & & \\
\hline
52 & & & & \\
\hline
54 & \checkmark & & & \\
\hline
62 & \checkmark & & & \\
\hline
66 & \checkmark & & \checkmark & \\
\hline
68 & & \checkmark & & \checkmark \\
\hline
72 & & & & \\
\hline
74 & \checkmark & \checkmark & & \checkmark \\
\hline
$\cdots$ & & & & \\
\hline
194 & \checkmark & \checkmark & \checkmark & \checkmark \\
\hline
618 & \checkmark & \checkmark & \checkmark & \\
\hline
998 & \checkmark & \checkmark & & \\
\hline
2312 & & \checkmark & & \\
\hline
\end{tabularx}
\end{table}

The data in the second and third columns of Table \ref{tab:discriminants} follows from Propositions~\ref{prop:associated k3} and \ref{prop:associated cubic}, and the data in the fourth column comes from \cite[Theorem 3.1]{DHOGV}. 
It bears explaining how we obtained the entries in the final column. 
First, note that since there are finitely many hyperk\"ahler manifolds of K3$^{[2]}$-type in any birational equivalence class \cites{MR3436156, MR3679618}, the general Debarre--Voisin variety of discriminant $d$ is birational to the Fano variety of lines on a cubic fourfold if and only if the general Fano variety of lines on a cubic fourfold of discriminant $d$ is birational to a Debarre--Voisin variety.  
We then need only look in the movable cone of $F(Y)$ for a class of square $22$ and divisibility $2$, where $Y$ is a general cubic fourfold of discriminant $d$. 
In the discriminants above, whenever $\Mov(F(Y))$ contains classes of square $22$, at least one is of the form $mg+n\lambda$ where $g$ is the Pl\"ucker polarization, $\lambda$ is the second generator of $\NS(F(Y))$, $m$ is odd, and $n$ is even. 
Since $\div_{H^2(F,\bZ)} g =2$, this class must have divisibility $2$.

\section{Peskine sixfolds of discriminant 24}\label{sec:discr 24}

As previously mentioned, for a general trivector $\sigma\in\calP_{24}$, the Debarre--Voisin variety $\DV$ is smooth, but the Peskine sixfold $\pesk\subset\bP(V_{10})$ has one isolated singularity. More specifically, by \cite[Proposition 4.2]{BenedettiSong}, there is a unique flag $W_1\subset W_6\subset V_{10}$ (with subscripts indicating the dimensions of the subspaces) such that $\sigma(W_1,W_6,V_{10})=0$, and $\pesk$ is singular at $[W_1]$. From Proposition~ \ref{prop:associated cubic}, we know $\sigma$ has an associated cubic fourfold $Y$, which we identify explicitly in this section. 
We show, \emph{a fortiori}, that the Fano variety of lines $F(Y)$ on $Y$ is isomorphic to $\DV$ (see Remark~\ref{remark:birfano}).

\subsection{Lines on Peskine sixfolds}
First, we recall and extend some constructions from \cite[Section 2.7]{BenedettiSong} related to the Fano variety of lines $F(\pesk)$ on $\pesk$ and to the sixfold
\[
X^\sigma_7=\{[V_7]\;|\;\rank(\sigma|_{V_7})\le5\}\subset\Gr(7,V_{10}),
\]
where $\rank(\sigma|_{V_7}) = 7 - \dim \ker(\sigma|_{V_7})$ and $\ker (\sigma|_{V_7})=\{v \in V_{10} : \sigma(v, V_7, V_7)=0\}$.
For general $\sigma\in\calM_\sigma\setminus(\calP_{22}\cup\calP_{24}\cup\calP_{28})$, \cite[Theorem 2.20]{BenedettiSong} describes an isomorphism $X_7^\sigma \simeq F(\pesk)$\footnote{While the statement of \cite[Theorem 2.20]{BenedettiSong} is correct, the proof is incomplete. We follow the outline of their argument but present the full details along with some new arguments to extend the results to $\mathcal{P}_{24}$. The authors have informed us they will provide a different proof in an upcoming version \cite{BSnew}.}. We obtain a similar result in discriminant $24$:

\begin{proposition}\label{prop:little phi}
    Suppose $\sigma\in\calP_{24}\setminus(\calP_{22}\cup\calP_{28})$ and that $\pesk$ has finitely many singular points, both of which conditions are general in $\calP_{24}$. Then there is a morphism $\varphi:X_7^\sigma\to F(\pesk)$ which is a bijection on closed points. 
\end{proposition}

We begin with two lemmas.

\begin{lemma}\label{lemma:sum of kernels}
    Suppose $\sigma\not\in\calP_{22}\cup\calP_{28}$ and that $\pesk$ has only finitely many singular points. If $\bP(V_2)\subset\pesk$ is a line not passing through any of the singular points of $\pesk$, then the sum
    \[
    U=\sum_{v\in V_2}\ker(\sigma(v,-,V_{10}))
    \]
    is $7$-dimensional.
\end{lemma}
\begin{proof}
    Since $\bP(V_2)\subset\pesk$, each summand in the definition of $U$ is at least $4$-dimensional, and since $\bP(V_2)$ does not pass through any singular points of $\pesk$, each summand is exactly $4$-dimensional. Thus $4\le\dim (U)\le8$, and we rule out all cases except $\dim(U)=7$.

    \begin{itemize}
    \item[\bf Case 1:] If $\dim(U)=4,$ then $U=\ker(\sigma(V_1, -, V_{10})$ for all $V_1\subset V_2.$ It follows that $V_2\subset U.$ Choose a $V_3\subset U$ containing $V_2,$ and write $U=V_3\oplus\langle x\rangle$ and $V_3=V_2\oplus\langle y\rangle$. Since $\sigma(V_2, V_2\oplus \langle y\rangle, V_{10})=0$ and $\sigma( y, V_2,V_{10})=\sigma(V_2,y,V_{10})=0,$ we see that $\sigma(V_3,V_3,V_{10})=0.$ Applying Definition \ref{defn:flag divisor}, this means that $\sigma\in \calP_{22},$ violating our generality assumption.
    \item[\bf Case 2:] If $\dim(U)=5,$ let $V_2=\langle x_1,x_2\rangle$ and $K_i=\ker(\sigma(x_i,-,V_{10}))$. The intersection $K_1\cap K_2=V_3$ must be $3$-dimensional, and $\sigma(V_2,V_3,V_{10})=0$. We consider two subcases:
    \begin{enumerate}
        \item If $\sigma(V_2,V_2,V_{10})=0$, then $V_2\subset K_i$, so $V_2\subset V_3$, and $V_3=V_2\oplus\langle y\rangle$ for some $y$. Then from the observations $\sigma(V_2,V_3,V_{10})=0$, $\sigma(V_2,y,V_{10})=0$, $\sigma(y,V_2,V_{10})=0$, and $\sigma(y,y,V_{10})=0$, linearity implies $\sigma(V_3,V_3,V_{10})=0$. As in the first case, this means $\sigma\in\calP_{22}$.
        \item Otherwise, $\sigma(V_2,V_2,V_{10})\neq 0.$ First, we show that $V_2\cap V_3=\{0\}$. Suppose $v\in V_2\cap V_3$, and write $v=a_1x_1+a_2x_2$. Using linearity,  $0=\sigma(x_i, v, V_{10})=\sigma(x_i,a_jx_j,V_{10})$, so if $a_j\neq0$, then $x_j \in K_i$. Hence if $v\neq0$, we have the contradiction $\sigma(V_2,V_2,V_{10})= 0$.
        So, $U=V_2\oplus V_3$. Let $V_9=\ker\sigma(V_2,V_2,-)$, noting that $U\subset V_9$ and $\sigma(V_2,U,V_9)=0$. 
        
        We now look for flags $U\subset V_7\subset V_9$ such that $\sigma(V_2,V_7,V_7)=0$. Equivalently, we describe the locus $Z\subset\Gr(2,V_9/U)$ parametrizing $2$-dimensional subspaces of $V_9/U$ on which $\sigma(V_2,-,-)$ vanishes. This locus is cut out by the vanishing of two linear forms: $\sigma(V_2,V_7/U,V_7/U)=0$ for $U\subset V_7\subset V_9$ if $\sigma(x_1,-,-)=0$ and $\sigma(x_2,-,-)=0$ as elements of $\bigwedge^2(V_7/U)^*$, both of which conditions are determined by $2\times2$ Pfaffians. Hence $\dim Z=2$, and every $V_7$ such that $V_7/U \in Z$ satisfies $\varphi(V_7)=[\bP(V_2)]$,
        contradicting the injectivity of $\varphi$.
    \end{enumerate}
    \item[\bf Case 3:] If $\dim(U)=6,$ then for each $V_1\subset V_2$, the flag $V_1\subset U\subset V_{10}$ satisfies $\sigma(V_1,U,V_{10})=0$. By Definition \ref{defn:flag divisor}, this implies that $\pesk$ is singular along the line $\bP(V_2)$, violating our generality assumption on the number of singular points.
    \item[\bf Case 4:] If $\dim(U)=8$, then for any $V_1\subset V_2$, $\rank(\sigma(V_1,-,-))\leq 4$, since $\sigma(V_1,-,-)|_U=0.$ Thus again, $\pesk$ is singular along the line $\bP(V_2)$.
\end{itemize}

Having rules out all other possibilities, $\dim(U)=7$.
\end{proof}

\begin{lemma}\label{lemma:varphi preimages}
    Suppose $\sigma\not\in\calP_{22}\cup\calP_{28}$ and that $\pesk$ has only finitely many singular points. If $\bP(V_2)\subset\pesk$ is a line not passing through any of the singular points of $\pesk$, then there is some $7$-dimensional space $V_7\supset V_2$ such that $\sigma(V_2,V_7,V_7)=0$.
\end{lemma}
\begin{proof}
    We construct $V_7$ by working in a basis. Write $V_2=\langle x_1,x_2\rangle$, and let $K_i=\ker(\sigma(x_i,-,V_{10}))$. The subspace $U=K_1+K_2$ is $7$-dimensional by Lemma~\ref{lemma:sum of kernels}, so $K_1\cap K_2$ is $1$-dimensional. It is easy to see that $K_1\cap K_2\not\subset V_2$, or else we would have $K_1\cap K_2=V_2$. So, we can write $K_1=\langle x_1,k,y_1,y_2\rangle$, $K_2=\langle x_2,k,y_3,y_4\rangle$. We analyze three cases:
    
    \begin{itemize}
        \item[\bf Case 1:] Suppose $\sigma(x_1,y_3,y_4)=0$ and $\sigma(x_2,y_1,y_2)=0$. By linearity, $\sigma(V_2,U,U)=0$, so we can set $U=V_7$.
        \item[\bf Case 2:] Next, suppose $\sigma(x_1,y_3,y_4)=0$ but $\sigma(x_2,y_1,y_2)\neq0$. After rescaling we can assume $\sigma(x_2,y_1,y_2)=1$. 
        Choose a splitting $V_{10}=U\oplus U^\perp$. 
        Repeated use of the linearity of $\sigma$ shows that $\sigma(V_2,V_2,U)=0$. 
        Notice that if $\sigma(x_1,x_2,U^\perp)=0$, then $\sigma(V_2\oplus\langle k\rangle,V_2\oplus\langle k\rangle,U\oplus U^\perp)=0$, which would mean $\sigma\in\calP_{22}$, violating our assumptions on $\sigma$.
        Thus, to choose a basis for $U^\perp$, we can first choose some nonzero vector
        \[
        z_1\in \ker(\sigma(x_1,x_2,-))\cap\ker(\sigma(x_1,y_3,-))\cap U^\perp,
        \] 
        and then choose the remaining basis vectors $U^\perp=\langle z_1,z_2,z_3\rangle$ so that $\sigma(x_1,x_2,z_2)=1$.
        Relative to the basis $\{ x_1,x_2,k,y_1,\dots,y_4,z_1,z_2,z_3\}$ for $V_{10}$, we can represent the form $\sigma(ax_1+bx_2,-,-)$  by the skew-symmetric matrix
        \[
        \begin{pmatrix}
            0 & 0 & 0 & 0 & 0 & 0 & 0 & 0 & -b & *\\
            0 & 0 & 0 & 0 & 0 & 0 & 0 & 0 & a & *\\
            0 & 0 & 0 & 0 & 0 & 0 & 0 & 0 & 0 & 0\\
            0 & 0 & 0 & 0 & b & 0 & 0 & d_{11}b & d_{12}b & d_{13}b\\
            0 & 0 & 0 & -b & 0 & 0 & 0 & d_{21}b & d_{22}b & d_{23}b\\
            0 & 0 & 0 & 0 & 0 & 0 & 0 & 0 & c_{32}a & c_{33}a\\
            0 & 0 & 0 & 0 & 0 & 0 & 0 & c_{41}a & c_{42}a & c_{43}a\\
            0 & 0 & 0 & -d_{11}b & -d_{21}b & 0 & -c_{41}a & 0 & * & *\\
            -b & a & 0 & -d_{12}b & -d_{22}b & -c_{32}a & -c_{42}a & * & 0 & *\\
            * & * & 0 & -d_{13}b & -d_{23}b & -c_{33}a & -c_{43}a & * & * & 0
        \end{pmatrix}
        \]
        where $c_{ij}=\sigma(x_1,y_i,z_j)$ and $d_{ij}=\sigma(x_2,y_i,z_j)$. Subtracting appropriate multiples of $y_1$ and $y_2$ from the $z_j$s, and subtracting multiples of $x_2$ from $y_3$ and $y_4$, we obtain a new basis $\{x_1,x_2,k,y_1,y_2,y_3',y_4',z_1',z_2',z_3'\}$ in which the matrix becomes the following.
        \[
        \begin{pmatrix}
            0 & 0 & 0 & 0 & 0 & 0 & 0 & 0 & -b & *\\
            0 & 0 & 0 & 0 & 0 & 0 & 0 & 0 & a & *\\
            0 & 0 & 0 & 0 & 0 & 0 & 0 & 0 & 0 & 0\\
            0 & 0 & 0 & 0 & b & 0 & 0 & 0 & 0 & 0\\
            0 & 0 & 0 & -b & 0 & 0 & 0 & 0 & 0 & 0\\
            0 & 0 & 0 & 0 & 0 & 0 & 0 & 0 & 0 & c_{33}a\\
            0 & 0 & 0 & 0 & 0 & 0 & 0 & c_{41}a & 0 & c_{43}a\\
            0 & 0 & 0 & 0 & 0 & 0 & -c_{41}a & 0 & * & *\\
            -b & a & 0 & 0 & 0 & 0 & 0 & * & 0 & *\\
            * & * & 0 & 0 & 0 & -c_{33}a & -c_{43}a & * & * & 0
        \end{pmatrix}
        \]
        If $c_{33}=0$, then $\sigma(V_2,y_3,V_{10})=0$, so $y_3\in K_1\cap K_2$, a contradiction since $k$ spans $K_1\cap K_2$. Hence after rescaling, we can assume $c_{33}=1$. 
        Since $\bP(V_2)\subset\pesk$, the matrix above has rank 6 for all $(a,b)\in\bP^1$, forcing $c_{41}=0$. Setting $V_7=\langle x_1,x_2,k,y_1,y_3',y_4',z_1'\rangle$, we have $\sigma(V_2,V_7,V_7)=0$. 
        
        \item[\bf Case 3:] The last case is where (after rescaling $x_1,x_2$) we have $\sigma(x_1,y_3,y_4)=\sigma(x_2,y_1,y_2)=1$. Choose a splitting $V_{10}=U\oplus U^\perp$. As in the previous case, if $\sigma(x_1,x_2,U^\perp)=0$, then we reach the contradiction $\sigma\in\calP_{22}$. So, we can choose a basis $U^\perp=\langle z_1,z_2,z_3\rangle$ so that $\sigma(x_1,x_2,z_1)=1$, and we can choose the remaining basis vectors so that $z_2,z_3\in\ker(\sigma(x_1,x_2,-))$.
        
        Relative to the basis $\{x_1,x_2,k,y_1,\dots,y_4,z_1,z_2,z_3\}$ for $V_{10}$, we can represent the form $\sigma(ax_1+bx_2,-,-)$ by the skew-symmetric matrix
        \[
        \begin{pmatrix}
            0 & 0 & 0 & 0 & 0 & 0 & 0 & -b & 0 & 0\\
            0 & 0 & 0 & 0 & 0 & 0 & 0 & a & 0 & 0\\
            0 & 0 & 0 & 0 & 0 & 0 & 0 & 0 & 0 & 0\\
            0 & 0 & 0 & 0 & b & 0 & 0 & d_{11}b & d_{12}b & d_{13}b\\
            0 & 0 & 0 & -b & 0 & 0 & 0 & d_{21}b & d_{22}b & d_{23}b\\
            0 & 0 & 0 & 0 & 0 & 0 & a & c_{31}a & c_{32}a & c_{33}a\\
            0 & 0 & 0 & 0 & 0 & -a & 0 & c_{41}a & c_{42}a & c_{43}a\\
            b & -a & 0 & -d_{11}b & -d_{21}b & -c_{31}a & -c_{41}a & 0 & * & *\\
            0 & 0 & 0 & -d_{12}b & -d_{22}b & -c_{32}a & -c_{42}a & * & 0 & *\\
             0 & 0 & 0 & -d_{13}b & -d_{23}b & -c_{33}a & -c_{43}a & * & * & 0\\
        \end{pmatrix}
        \]
        where $c_{ij}=\sigma(x_1,y_i,z_j)$ and $d_{ij}=\sigma(x_2,y_i,z_j)$. Subtracting multiples of the $y_i$s from the $z_j$s, we obtain a new basis $\{x_1,x_2,k,y_1,y_2,y_3,y_4,z_1',z_2',z_3'\}$ under which the form is represented by the matrix
        \[
        \begin{pmatrix}
            0 & 0 & 0 & 0 & 0 & 0 & 0 & -b & 0 & 0\\
            0 & 0 & 0 & 0 & 0 & 0 & 0 & a & 0 & 0\\
            0 & 0 & 0 & 0 & 0 & 0 & 0 & 0 & 0 & 0\\
            0 & 0 & 0 & 0 & b & 0 & 0 & 0& 0 & 0\\
            0 & 0 & 0 & -b & 0 & 0 & 0 & 0& 0 & 0\\
            0 & 0 & 0 & 0 & 0 & 0 & a & 0 & 0 & 0\\
            0 & 0 & 0 & 0 & 0 & -a & 0 & 0 & 0 & 0\\
            b & -a & 0 & 0 & 0 & 0 & 0 & 0 & * & *\\
            0 & 0 & 0 & 0 & 0 & 0 & 0 & * & 0 & \ell(a,b)\\
             0 & 0 & 0 & 0 & 0 & 0 & 0 & * & -\ell(a,b) & 0\\
        \end{pmatrix}
        \]
        for some linear form $\ell(a,b)$. Since $\bP(V_2)\subset\pesk$, this matrix has rank $6$ for all $(a,b)\in\bP^1$. Hence the form $\ell(a,b)$ vanishes on every point of $\bP^1$ for which $a,b\neq0$, which is only possible if $\ell(a,b)=0$. 
        Setting $V_7=\langle x_1,x_2,k,y_1,y_3,z_2',z_3'\rangle$, we have $\sigma(V_2,V_7,V_7)=0$.
    \end{itemize}
\end{proof}

We are now prepared to relate $X_7^\sigma$ to $F(\pesk)$.

\begin{proof}[Proof of Proposition \ref{prop:little phi}]
    As done after Remark 2.19 in \cite{BenedettiSong}, we define a map $\varphi:X_7^\sigma\to F(\pesk)$ by the formula $\varphi([V_7])=[\bP(\ker(\sigma|_{V_7}))]$. Let us recall why this is well-defined. Since $\sigma\not\in\calP_{28}$, we have $\rank (\sigma|_{V_7})=5$, so $V_2=\ker(\sigma|_{V_7})$ is $2$-dimensional. Since $\sigma(V_2,V_7,V_7)=0$,  for each $1$-dimensional subspace $V_1\subset V_2$, we have $\rank(\sigma(V_1,-,-))\le6$, so $[V_1]\in \pesk$. In particular, $[\bP (V_2)]\in F(\pesk)$. 

    Next we claim that $\varphi$ is injective: suppose that there exist $V_7\neq V_7'$ such that $V_2:=\ker(\sigma|_{V_7})=\ker(\sigma|_{V_7'}).$ Let $W=V_7+V_{7}'\subset V_{10}$, so $\dim W\geq 8$, and $V_2\subset \ker\sigma|_W.$ 
    For any $V_1\subset V_2$, we have $\sigma(V_1,W,W)=0$, so $\rank (\sigma (V_1, -, -))\leq 4$. Applying \cite[Proposition 2.9]{BenedettiSong} to the flag $V_1\subset U$ for any $6$-dimensional subspace $U$ with $V_1\subset U\subset\ker(\sigma(V_1,-,V_{10}))$ shows that $\pesk$ is singular at $[V_1]$. Varying the $V_1\subset V_2$ shows $\pesk$ is singular along all of $\bP (V_2)$,  violating our generality assumption on the number of singular points. 

    Let $[W_1],\dots,[W_k]\in\pesk$ be the singular points. By Lemma~\ref{lemma:varphi preimages}, the open set
    \[
    F(\pesk)\setminus\{\bP(V_2)\subset \pesk\;|\; W_i\subset V_2\;\text{ for some }\;1\le i\le k\}
    \]
    lies in the image of $\varphi$ since for each line $\bP(V_2)\subset\pesk$ avoiding the singular points, we construct a $7$-dimensional space $V_7$ such that $\sigma(V_2,V_7,V_7)=0$. Since $\dim(\pesk)=6$, the locus of lines in $\pesk$ through $\bP(W_i)$ is at most $5$-dimensional. In particular, 
    \[
    \dim(F(\pesk)\setminus\mathrm{im}(\varphi))\le5<\dim X_7^\sigma=\dim(\mathrm{im}(\varphi)).
    \]
    Since $X_7^\sigma$ is complete, the image of $\varphi$ is closed, and by comparing dimensions above, we see $\mathrm{im}(\varphi)=F(\pesk)$. The fact that $\varphi$ is a bijection on closed points then follows from the injectivity of $\varphi$.
\end{proof}

\begin{remark}
    We do not claim that $X_7^\sigma$ and $F(\pesk)$ are isomorphic because we did not compute the singular locus of either variety. It seems plausible that $F(\pesk)$ has worse singularities than $X_7^\sigma$ along the locus of lines through each singular point $[W_i]$.
\end{remark}

\subsection{The cubic fourfold and its variety of lines}

From now on, we fix a distinguished flag $W_1\subset W_6\subset V_{10}$ such that $\sigma(W_1,W_6,W_{10})=0$, but we do not assume that this is the only such flag for $\sigma$. In \cite[Lemma 3.4]{BenedettiFaenzi}, the authors explain that the intersection $Y= \pesk\cap\bP (W_6)$ is a cubic fourfold, which they speculate is singular at its distinguished point $[W_1]$ in \cite[Remark 3.5]{BenedettiFaenzi}. However, we demonstrate that this is not the case:

\begin{proposition}\label{prop:smoothcubic}
    For a general $\sigma\in\calP_{24}$ with distinguished flag $W_1\subset W_6\subset V_{10}$, the cubic fourfold $Y=\pesk\cap\bP (W_6)$ is smooth.
\end{proposition}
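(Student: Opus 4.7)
The plan is to reduce Proposition~\ref{prop:smoothcubic} to exhibiting a single $\sigma\in\calP_{24}$ for which the cubic fourfold $Y$ is smooth, and then invoke the explicit computation carried out in Appendix~\ref{appendix:examples}. The reduction rests on two observations: smoothness of $Y$ is an open condition in the family of cubic fourfolds $\{Y_\sigma\}_{\sigma\in\calP_{24}}$, and $\calP_{24}$ is irreducible.

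First, I would make precise the family of cubic fourfolds over (a dense open subset of) $\calP_{24}$. By \cite[Proposition 4.2]{BenedettiSong}, for a general $\sigma\in\calP_{24}$ the distinguished flag $W_1\subset W_6\subset V_{10}$ is unique, so on some dense open $U\subset\calP_{24}$ (possibly after an \'etale base change) the flag varies algebraically with $\sigma$, yielding a subbundle $\mathcal{W}_6\subset V_{10}\otimes\calO_U$ whose projectivization $\bP(\mathcal{W}_6)\to U$ is a $\bP^5$-bundle. Intersecting fiberwise with the Peskine sixfolds $X_1^\sigma$ produces a relative cubic fourfold $\mathcal{Y}\to U$ whose fiber over $\sigma$ is $Y_\sigma=\pesk\cap\bP(W_6)$. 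The locus $\{\sigma\in U:Y_\sigma\text{ is smooth}\}$ is then open in $U$. Since $\calP_{24}\simeq\calD^{1,6,10}$ is birational to the irreducible Heegner divisor $\calD_{24}$, it is irreducible, and so this open locus is either empty or dense in $\calP_{24}$.

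Second, to verify non-emptiness, I would appeal to the example constructed in Appendix~\ref{appendix:examples}: one writes down an explicit trivector $\sigma_0$, checks that $X_1^{\sigma_0}$ has the expected single isolated singularity at a distinguished flag (so in particular $\sigma_0\in\calP_{24}\setminus\calP_{22}$), and then applies the Jacobian criterion to the cubic polynomial cutting out $Y_{\sigma_0}$ in $\bP(W_6)$. The main obstacle---and the genuine content of that computation---is smoothness at the point $[W_1]\in Y_{\sigma_0}$ itself: because the ambient Peskine sixfold is singular there, it was conjectured in \cite[Remark 3.5]{BenedettiFaenzi} that $Y$ inherits the singularity. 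What the appendix calculation shows is that, nonetheless, the linear part at $[W_1]$ of the defining cubic of $Y_{\sigma_0}$ is nonzero, so $Y_{\sigma_0}$ is smooth at $[W_1]$ and (by a direct Jacobian check) everywhere else. Together with the openness and irreducibility established in the previous paragraph, this proves the proposition.
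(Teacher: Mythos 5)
Your proposal is correct and takes essentially the same approach as the paper: the paper's proof simply states that it suffices to construct one example (which is done in Appendix~\ref{appendix:examples} via an explicit trivector and a \textsf{Magma} smoothness check), leaving the standard ``openness of smoothness in a family over the irreducible divisor $\calP_{24}$'' reduction implicit, which you have spelled out. The only difference is that you make the reduction explicit; the mathematical content is identical.
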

\begin{proof}
    It is enough to construct one example for which $Y$ is smooth, as done in Appendix~\ref{appendix:examples}.
\end{proof}

When $Y$ is smooth, its Fano variety of lines $F(Y)$ is a smooth hyperk\"ahler fourfold. Using Proposition~\ref{prop:little phi}, we construct an isomorphism $\psi:F(Y)\to\DV$. Together with Proposition~\ref{prop:smoothcubic} and Remark~\ref{remark:birfano}, the following proposition proves Theorem~\ref{main thm 2}.

\begin{proposition}\label{prop:fano dv}
    Let $\sigma\in\calP_{24}$ be a trivector with distinguished flag $W_1\subset W_6\subset V_{10}$, and let $Y=\pesk\cap\bP(W_6)$. If $\sigma$ satisfies the generality assumptions
    \begin{itemize}
        \item $\sigma\not\in\calP_{22}\cup\calP_{28}$,
        \item $\pesk$ has finitely many singular points, and
        \item the cubic fourfold $Y$ is smooth,
    \end{itemize}
    then $F(Y)$ and $\DV$ are isomorphic.
\end{proposition}
\begin{proof}
    The first two generality assumptions allow us to apply Proposition~\ref{prop:little phi}, so for each line $\bP(V_2)\subset\pesk$, there is a unique $V_7\in X_7^\sigma$ such that $\varphi([V_7])=[\bP (V_2)]$. 
    Arguing as in the last paragraph of the proof of \cite[Theorem 2.20]{BenedettiSong}, there is some $[V_6]\in \DV$ such that $V_2\subset V_6\subset V_7$. We first verify that the $V_6$ is unique, yielding a morphism $\Psi:F(\pesk)\rightarrow \DV$.

    Indeed, suppose toward contradiction that there exist two distinct flags $V_2\subset V_6,V_6'\subset V_7$ with $\sigma|_{V_6}=0$ and $\sigma|_{V_6'}=0$. Let $V_5=V_6\cap V_6'$, and write $V_6=V_5\oplus\langle x\rangle$ and $V_6'=V_5\oplus\langle y\rangle$. Further, let $K=\ker(\sigma(x,y,-)|_{V_5})$, noting $\dim(K)\ge4$. From the fact that $\sigma|_{V_6}=0$, we have $\sigma(K,V_6,V_6)=0$, and by construction of $K$, we have $\sigma(K,x,y)=\sigma(K,y,x)=0$, so linearity and the fact that $\sigma|_{V_6'}=0$ yields $\sigma(K,V_7,V_7)=0$. It follows that $\sigma\in\calP_{28}$, contradicting our first generality assumption.
    
    Let $\psi$ be the restriction of $\Psi$ to $F(Y)$, and consider the Stein factorization of $\psi$:
    \[
    F(Y)\overset{\alpha}\longrightarrow G\overset{\beta}\longrightarrow\DV.
    \]
    Let $B$ be the normalization of $\alpha(F(Y))$, so we obtain a further factorization:
    \[
    F(Y)\overset{\gamma}\longrightarrow B\longrightarrow G.
    \]
    By the third generality assumption, $F(Y)$ is a projective hyperk\"ahler manifold. Since $\gamma$ is a surjective morphism with connected fibers over a normal base $B$, it follows that $\gamma$ is either constant, an isomorphism, or a Lagrangian fibration over a plane (see \cite{ou} and \cite{HX} in the fourfold case).
    
    Assume toward a contradiction that $\gamma$ is a Lagrangian fibration. Then the general fiber of $\gamma$, hence also of $\alpha$, is an abelian surface, meaning the general fiber of $\psi$ is a disjoint union of abelian surfaces \cite{matsushita}. 
    On the other hand, the fibers of $\psi$ embed in the fibers of $\Psi:F(\pesk)\to\DV$, which we claim are (possibly degenerate) cubic surfaces, yielding a contradiction. Indeed, given $[V_6]\in \DV$, the linear section $\Pi=\pesk\cap\bP (V_6)$ is a (possibly degenerate) Palatini threefold by \cite[Proposition 5.3]{Han}, and $\Psi^{-1}([V_6])$ consists of lines in $\Pi$. Since a Palatini threefold is a $\bP^1$-bundle over a cubic surface, the claim follows. 
    A similar contradiction rules out $\gamma$ being constant.

    Hence $\gamma$ is an isomorphism, meaning $\psi$ is finite. Since $F(Y)$ and $\DV$ both have trivial canonical bundle, the morphism $\psi$ is unramified. On the other hand, $\DV$ is simply-connected, so no nontrivial \'etale covers exist. The result follows.
\end{proof}

The isomorphism above can be used to compare the twisted K3 surface $(S,\alpha)$ associated to $Y$ (described in \cite{hassett24}) to the twisted K3 surface $(S',\beta)$ associated to $\sigma$ (described in \cite{BenedettiFaenzi}).
With a better understanding of the relationship between special surfaces in $Y$ and special threefolds in $\pesk$, we expect that one could prove that $\pesk$ is rational when $\beta=0$, affirming Conjecture 4.1 of \cite{BenedettiFaenzi} by applying Hassett's constructions from \cite{hassett24}. Relatedly, it would be interesting to see how to produce the Peskine sixfold associated to a very general cubic fourfold of discriminant $24$, reversing the construction studied here. 

\appendix

\section{An example}\label{appendix:examples}
The following computations, justifying Proposition~\ref{prop:smoothcubic}, were done in \textsf{Magma} \cite{Magma}. 

Fix a complex vector space $V_{10}$ with basis $\{v_1,\dots,v_{10}\}$. We denote by $v_i^*$ the vectors in the dual basis of $V_{10}^\vee$. The trivector $\sigma\in\bigwedge^3V_{10}^\vee$ defined below satisfies $\sigma(W_1,W_6,V_{10})=0$, where  $W_1=\langle v_1\rangle$ and $W_6 = \langle v_1,\dots,v_6\rangle$, so $\sigma\in\calP_{24}$. Writing $[i,j,k] = v^*_{i} \wedge v^*_{j} \wedge v^*_{k}$, we set
{\small
\begin{align*}
\sigma &=-4 [1,7,8] + 2 [1,7,9] - [1,7,10] - 2 [1,8,9] + [1,8,10] + 4 [1,9,10] + 3 [2,3,5] - [2,3,6]\\
    & \quad - [2,3,7] - 3 [2,3,8] - 4 [2,3,9] + 3 [2,3,10] - [2,4,5] + [2,4,7] - 2 [2,4,8] - 2 [2,4,9]\\
    &\quad  - [2,4,10] - 3 [2,5,6] + [2,5,7] - 3 [2,5,9] - 4 [2,5,10] + 2 [2,6,7] - 3 [2,6,9] + 4 [2,6,10] \\
    &\quad  + 4 [2,7,8] + [2,7,9] + 2 [2,7,10] - 2 [2,8,9] - [2,8,10] - 3 [2,9,10] - 4 [3,4,5] + 2 [3,4,6]\\
    &\quad - [3,4,7] - [3,4,8] + 4 [3,4,9] + 2 [3,4,10] - [3,5,6] - [3,5,8] - 3 [3,5,9] - [3,5,10] \\
    &\quad- 2 [3,6,8] - 3 [3,6,9] - 4 [3,6,10] + [3,7,8] + 2 [3,7,9] + [3,7,10] + [3,8,9] - 3 [3,8,10] \\
    &\quad + 3 [3,9,10] + 3 [4,5,6] + 2 [4,5,7] + 4 [4,5,9] + [4,5,10] + 2 [4,6,7] - 2 [4,6,8] - [4,6,9] \\
    &\quad+ 3 [4,6,10] + 2 [4,7,8] + 2 [4,7,9] - 3 [4,7,10] + [4,8,9] - 3 [4,8,10] - [4,9,10] + [5,6,7] \\
    &\quad - 4 [5,6,8] + 4 [5,6,9] + 3 [5,7,8] - 4 [5,7,9] + 3 [5,8,9] + 2 [5,9,10] - 2 [6,7,8] + 4 [6,7,9] \\
    &\quad- 2 [6,8,9] - 3 [6,8,10] - [7,8,9] + 4 [7,8,10] + 4 [7,9,10] + 3 [8,9,10].
\end{align*}
}

Equations for the Peskine variety $X_1^{\sigma}$ can be found in the associated \textsf{Magma} code, which also verifies that the cubic fourfold $Y=X_1^{\sigma}\cap \bP (W_6)$, with defining equation below, is smooth.

\begin{align*}
f =& 16v_{1}^2v_{2} - 133v_{1}v_{2}^2 + 223v_{2}^3 - 32v_{1}^2v_{3} + 121v_{1}v_{2}v_{3} - 485v_{2}^2v_{3} + 578v_{1}v_{3}^2 - 340v_{2}v_{3}^2 \\
&- 18v_{3}^3 + 32v_{1}^2v_{4} - 646v_{1}v_{2}v_{4} + 471v_{2}^2v_{4} - 58v_{1}v_{3}v_{4} - 199v_{2}v_{3}v_{4} - 189v_{3}^2v_{4}  \\
&- 236v_{1}v_{4}^2 + 304v_{2}v_{4}^2 - 270v_{3}v_{4}^2 + 50v_{4}^3 + 320v_{1}^2v_{5} - 1028v_{1}v_{2}v_{5} + 530v_{2}^2v_{5} \\
&- 395v_{1}v_{3}v_{5} + 456v_{2}v_{3}v_{5} - 677v_{3}^2v_{5} - 782v_{1}v_{4}v_{5} + 1041v_{2}v_{4}v_{5} - 811v_{3}v_{4}v_{5}  \\ 
& - 406v_{4}^2v_{5} + 49v_{1}v_{5}^2 + 681v_{2}v_{5}^2 - 200v_{3}v_{5}^2 + 150v_{4}v_{5}^2 - 312v_{5}^3 - 144v_{1}^2v_{6}  \\
  &+ 1098v_{1}v_{2}v_{6} - 643v_{2}^2v_{6} - 40v_{1}v_{3}v_{6} - 144v_{2}v_{3}v_{6} + 1074v_{3}^2v_{6} + 413v_{1}v_{4}v_{6} \\ 
  &- 792v_{2}v_{4}v_{6} - 452v_{3}v_{4}v_{6} - 231v_{4}^2v_{6} + 19v_{1}v_{5}v_{6} + 18v_{2}v_{5}v_{6} + 502v_{3}v_{5}v_{6} \\
  &- 936v_{4}v_{5}v_{6} + 61v_{5}^2v_{6} - 567v_{1}v_{6}^2 + 1384v_{2}v_{6}^2 + 115v_{3}v_{6}^2 + 527v_{4}v_{6}^2 \\ 
  &+ 164v_{5}v_{6}^2 - 405v_{6}^3.
\end{align*}


\bibliographystyle{alpha}
\bibliography{bibliography}
\end{document}